\newcommand{\Eset}{\mathbb{E}}
\newcommand{\Rset}{\mathbb{R}}
\newcommand{\Ecal}{\mathcal{E}}
\newcommand{\Fcal}{\mathcal{F}}
\newcommand{\Gcal}{\mathcal{G}}
\newcommand{\Ncal}{\mathcal{N}}
\newcommand{\Pcal}{\mathcal{P}}
\newcommand{\Qcal}{\mathcal{Q}}
\newcommand{\Vcal}{\mathcal{V}}
\newcommand{\Xcal}{\mathcal{X}}
\newcommand{\Abf}{{\bf A}}
\newcommand{\Ebf}{{\bf E}}
\newcommand{\Gbf}{{\bf G}}
\newcommand{\Ibf}{{\bf I}}
\newcommand{\Qbf}{{\bf Q}}
\newcommand{\Vbf}{{\bf V}}
\newcommand{\Wbf}{{\bf W}}
\newcommand{\Xbf}{{\bf X}}
\newcommand{\Ybf}{{\bf Y}}
\newcommand{\abf}{{\bf a}}
\newcommand{\ebf}{{\bf e}}
\newcommand{\gbf}{{\bf g}}
\newcommand{\hbf}{{\bf h}}
\newcommand{\qbf}{{\bf q}}
\newcommand{\rbf}{{\bf r}}
\newcommand{\ubf}{{\bf u}}
\newcommand{\vbf}{{\bf v}}
\newcommand{\xbf}{{\bf x}}
\newcommand{\ybf}{{\bf y}}
\newcommand{\zbf}{{\bf z}}
\newcommand{\xibarbf}{{\bar{\xibf}}}
\newcommand{\ebarbf}{{\bar{\ebf}}}
\newcommand{\qbarbf}{{\bar{\qbf}}}
\newcommand{\xbarbf}{{\bar{\xbf}}}
\newcommand{\vbarbf}{{\bar{\vbf}}}
\newcommand{\1}{{\mathbf{1}}}
\newcommand{\xibf}{{\boldsymbol{\xi}}}
\newcommand{\Xibf}{{\boldsymbol{\Xi}}}
\newcommand{\xbar}{{\bar{x}}}
\newtheorem{prop}{Proposition}
\newtheorem{lem}{Lemma}
\newtheorem{thm}{Theorem}
\newtheorem{assump}{Assumption}
\newtheorem{remark}{Remark}
\newcommand{\redline}{\raisebox{2pt}{\tikz{\draw[-,red,solid,line width = 1.5pt](0,0) -- (6mm,0);}}}
\newcommand{\blueline}{\raisebox{2pt}{\tikz{\draw[-,blue,solid,line width = 1.5pt](0,0) -- (6mm,0);}}}
\newcommand{\blackline}{\raisebox{2pt}{\tikz{\draw[-,black,solid,line width = 1.5pt](0,0) -- (6mm,0);}}}
\begin{document}

\title{\LARGE \bf Distributed Stochastic Approximation  for Solving Network Optimization Problems Under Random Quantization
}

\author{{Thinh T. Doan$^{\dagger,\star}$, Siva Theja Maguluri$^{\star}$, Justin Romberg$^{\dagger}$}\\
$^\dagger$ School of Electrical and Computer Engineering\\
$^\star$ H. Milton Stewart School of Industrial and Systems Engineering\\
Georgia Institute of Technology, GA, 30332, USA\\ 
{\tt\small \{thinhdoan,\,siva.theja\}@gatech.edu, jrom@ece.gatech.edu.}}


\begin{abstract}
We study distributed optimization problems over a network when the communication between the nodes is constrained, and so information that is exchanged between the nodes must be quantized.  This imperfect communication poses a fundamental challenge, and this imperfect communication, if not properly accounted for, prevents the convergence of these algorithms.  Our first contribution in this paper is to propose a modified consensus-based gradient method for solving such problems using random (dithered) quantization.  This algorithm can be interpreted as a distributed variant of a well-known two-time-scale stochastic algorithm.  We then study the convergence and derive upper bounds on the rates of convergence of the proposed method as a function of the bandwidths available between the nodes and the underlying network topology, for both convex and strongly convex objective functions. Our results complement for existing literature where such convergence and explicit formulas of the convergence rates are missing. Finally, we provide numerical simulations to compare the convergence properties of the distributed gradient methods with and without quantization for solving the well-known regression problems over networks, for both quadratic and absolute loss functions.     
\end{abstract}

\maketitle


\section{Introduction}\label{sec:Intro}
In this paper, we consider optimization problems that are defined over a network of nodes\footnote{In this paper, nodes can be used to present for processors, robotics, or sensors.}. The objective function is composed of a sum of local functions where each function is known by only one node. In addition, each node is only allowed to interact with its neighboring nodes that are connected to it through the network. We assume no central coordination between the nodes and since each node knows only its local function, they are required to cooperatively solve the problems. This necessitates the development of distributed algorithms, which can be done under communication and computation constraints.

We are motivated by various applications of such problems within engineering. A standard example is the problem of estimating the radio frequency in a wireless network of sensors where the goal is to cooperatively estimate the radio-frequency power spectrum density through solving a regression problem  \cite{Giannakis2010}. In this application, the objective function is the total loss over the entire measured data by the sensors, which are scattered across a large geographical area.  Due to some privacy concerns, the sensors may not be willing to share their measurements, but only their own estimates, making distributed algorithms become necessary.

Another possible application is the problem of distributed information processing in \textit{edge (fog) computing}, which has recently received a surge in interests \cite{ChiangZ2016}. This new technology, emerging from the rapid development of the Internet of Things, aims to reduce the burden of communication and computation at cloud or centralized servers by shifting the computing infrastructure closer to the source of data (e.g, smart devices, wireless sensors, or mobile robots). In this context, distributed algorithms provide a promising solution for coping with the large-scale complex networks while handling massive amounts of generated data.     

Distributed algorithms for these problems have received wide attention during the last decade, mostly focusing on three classes of algorithms, namely, the \textit{alternating direction method of multipliers (ADMM)} \cite{WeiO2012,BoydPCPE2011,ShiLYWY2014, MakhdoumiO2017}, \textit{distributed dual methods} (mirror descent/dual averaging) \cite{DuchiAW2012, TsianosLR2012, DoanBNB2019, LiCZW2016, YUANHHJ2018}, and \textit{distributed gradient algorithms} \cite{ShiLWY2015,GuLi2017,NedicO2009, NedicOS2017, YuanLY2016,LorenzoS2016,Doan2018,DoanBS2018, DoanBS2017,LanLZ2017,ShahB2018}.  The focus in this paper will be on distributed (sub)gradient algorithms, as they have the benefits (in terms of convergence rates and simplicity) of both ADMM and dual methods.  We refer interested readers to the recent survey paper \cite{NedicOR2018} for a summary of existing results in this area.

In distributed algorithms, the nodes are required to communicate and exchange information while cooperatively solving the problems. Thus, communication constraints, such as delays and finite bandwidth, are critical issues in distributed systems. For this reason, there are recent interests in studying the convergence of distributed gradient methods while taking into account these communication constraints. The convergence rates of such methods in the presence of communication delays have been studied in \cite{ WuYLYS2018, TianSDS2018,DoanBS2018, DoanBS2017}, while some works presented in \cite{LanLZ2017, NotarnicolaSSG2018} focus on reducing the number of communication rounds between nodes. 

Our focus is on studying the convergence properties of distributed gradient methods when the nodes are only allowed to exchange their quantized values due to the finite bandwidths shared between them. Different variants of distributed gradient methods under quantized communication have been studied in \cite{PuZJ2017, JueyouGZX2016, YiH2014, NedicOOT2008b, DoanMR2018, ReisizadehMHP2018}. In \cite{JueyouGZX2016, NedicOOT2008b} the authors only show the convergence to a neighborhood around the optimal of the problem due to the quantized error. On the other hand, an asymptotic convergence to the optimal has been studied in \cite{PuZJ2017,DoanMR2018,YiH2014}; however, a condition on the growing communication bandwidth is assumed in these works to remove the quantized error. Recently, the authors in \cite{ReisizadehMHP2018} study distributed gradient methods with random  quantization using finite bandwidths, and show a convergence rate $\mathcal{O}(1/k^{(1-\gamma)/2})$ for some $\gamma\in(0,1)$ for unconstrained problems with strongly convex and smooth objective functions.

We consider in this paper a stochastic variant of distributed gradient methods with random quantization, which can be viewed as a distributed version of the well-known two-time-scale stochastic approximation. Similar to \cite{ReisizadehMHP2018}, we consider the problems where the nodes only share a finite communication bandwidth. However, unlike \cite{ReisizadehMHP2018} we consider a constrained problem with nonsmooth objective functions. We derive explicit formulas for the rates of convergence of the algorithm, which show the dependence on the network topology and the communication capacity, for both convex and strongly convex objective functions. It is worth to note that the techniques used to derive the convergence rates in this paper are different from the ones in \cite{ReisizadehMHP2018}. While the authors in \cite{ReisizadehMHP2018} use a dual approach in their convergence analysis, we utilize the standard techniques from two-time-scale stochastic approximation studied in \cite{borkar2008,KY2009,WangFL2017,KondaT2004}. This allows us to clearly show the impact of network topology and communication bandwidths on the convergence of the algorithm.   

\textbf{Main Contributions}.
The main contributions of this paper are two folds. We first propose a distributed variant of the well-known two-time-scale stochastic approximation for solving network optimization problems under random quantization. Second, we study the convergence and derive upper bounds on the rates of convergence of such methods. In particular, when the objective function is convex we first show the almost sure convergence of the variables of the nodes to the optimal solution of the problem. Then under an appropriate choice of the step sizes, we derive the convergence of the objective function to the optimal value in expectation at a rate $\mathcal{O}\left(\Delta\ln(k+1)\,/\,(1-\sigma_2)^2k^{1/4}\right)$, where $k$ is the number of iterations and $1-\sigma_2$ represents for the connectivity of the underlying network. In addition, $\Delta$ represents for quantization errors, which depends on the size of the communication bandwidths. When the objective function is strongly convex, we further show that this rate occurs at $\mathcal{O}\left(C\ln(k+1)\,/\,(1-\sigma_2)^3k^{1/3}\right)$. We then conclude our paper with numerical experiments comparing the performance of distributed subgradient methods for solving the well-known regression problems with and without quantization.             

\subsection{Notation And Definition}
\textbf{Notation:} We first introduce here a set of notation and definition used throughout this paper. We use boldface to distinguish between vectors in $\Rset^d$ and scalars in $\Rset$. Given a collection of vectors $\xbf_1,\ldots,\xbf_n$ in $\Rset^d$, we denote by $\Xbf$ a matrix in $\Rset^{n\times d}$, whose $i$-th row is $\xbf_i^T$. We then denote by $\|\xbf\|$ and $\|\Xbf\|$ the Euclidean norm and the Frobenius norm of $\xbf$ and $\Xbf$, respectively.  Let $\1$ be the vector whose entries are $1$ and $\Ibf$ the identity matrix. Given a closed convex set $\Xcal$, we denote by $\Pcal_{\Xcal}[\xbf]$ the projection of $\xbf$ to $\Xcal$. 

Given a nonsmooth convex function $f:\Rset^{d}\rightarrow\Rset$, we denote by $\partial f(\xbf)$ its subdifferential estimated at $x$, i.e., $\partial f(\xbf) \triangleq \{g\in\Rset^{d}\,|\, f(\ybf) \geq f(x) + g^T(\ybf-\xbf), \; \forall \ybf\in\Rset^d \}$ is the set of subgradients of $f$ at $\xbf$. Since $f$ is convex, $\partial f(\cdot)$  is nonempty. The function $f$ is $L$-Lipschitz continuous if and only if 
\begin{align}
|\,f(\xbf)-f(\ybf)\,| \leq L\|\xbf-\ybf\|,\quad \forall\; \xbf,\ybf\in \Rset^d. \label{notation:Lipschitz}
\end{align}   
Note that the $L$-Lipschitz continuity of $f$ is equivalent to the subgradients of $f$ are uniformly bounded by $L$ \cite{ShalevShwartz2012}. A function $f$ is $\mu$-strongly convex if and only if $f$ satisfies $\forall x,y$
\begin{align}
f(y) - f(x) -g(x)^{T}(y-x)\geq \frac{\mu}{2}(y-x). \label{notation:sc}
\end{align}

\textbf{Random Quantization:} We now present a brief review of random quantization adopted from \cite{AysalCR2008}, which is also equivalent to dithered quantization in signal processing. In particular, given a finite interval $[\ell,u]$ we divide this interval into a $B$ number of bins $\ell = \tau_1\leq\tau_2\leq\ldots\leq\tau_B = u$. We assume that the points $\tau_i$ are uniformly spaced with a distance $\Delta$, i.e., $\Delta = \tau_{i+1}-\tau_i$ for all $i=0,\ldots,B-1$ implying that $\Delta = (u-\ell)\,/\,(B-1)$. Thus, to present the points $\tau_i$ we need a finite $b$ bits where $b = \log_2(B)$.  

Next given $x\in[\tau_i,\tau_i+1)$ we denote by $p = (x-\tau_i)\,/\,\Delta$. Then the random quantization $q$  of $x$ is defined as
\begin{align}
q = \Qcal(x) \triangleq \left\{\begin{array}{lll}
\tau_i & \text{w/ prob. } 1-p\\
\tau_{i+1} & \text{w/ prob. } p.
\end{array}\right.\label{notation:random_operator} 
\end{align}
As shown in \cite{AysalCR2008} the random quantization Eq.\ \eqref{notation:random_operator} satisfies
\begin{align}
\begin{aligned}
\Eset[q] = x,\qquad&\text{and}\qquad \Eset[(q-x)^2] \leq \frac{\Delta^2}{4}\\
& |\,x- q\,| \leq \Delta\quad {a.s.}
\end{aligned}\label{notation:random_quantization}
\end{align}
In addition, we have $q = \tau_i$ a.s. if $x = \tau_i$ for some $i = 1,\ldots,B.$ Thus, $q\in[\ell,u]$ a.s. if $x\in[\ell,u]$. 

Finally, we consider the random quantization for the vector case. In particular, consider a compact set $\Xcal\subset\Rset^d$ defined as
\begin{align*}
\Xcal = [\ell^1,u^1]\times\ldots\times[\ell^d,u^d].
\end{align*}
With some abuse of notation, given a vector $\xbf\in\Xcal$ we denote by $\qbf = \Qcal(\xbf)$, where $q^i = \Qcal(x^i)$, the quantization of $i$-th coordinate of $\xbf$, for $i = 1,2,\ldots,d$. Here, each $q^i$ is defined by using Eq.\ \eqref{notation:random_operator} with a uniform distance $\Delta^i$ associated with each interval $[\ell^i,u^i]$, for all $i=1\ldots,d$.

\section{Problem Formulation}\label{sec:ProbForm}
We consider an optimization problem defined over a network of $n$ nodes. Associated with each node $i$ is a nonsmooth convex function $f_i:\Xcal\rightarrow\Rset$ over a convex compact set $\Xcal\subset\Rset^d$. The goal is to solve  
\begin{align}
\underset{\xbf\in\Xcal}{\text{minimize }} f(\xbf)\triangleq \sum_{i=1}^n f_i(\xbf).\label{prob:obj}
\end{align}
Each node $i$ knows only its local function $f_i$, and since there is no central coordination, the nodes are required to cooperatively solve the problem. We will use a distributed consensus-based (sub)gradient method where each node $i$ maintains their own version of the decision variables $\xbf_i\in\Rset^{d}$; the goal is to have all the $\xbf_i$ converge to $\xbf^*$, a solution of problem \eqref{prob:obj}. Each node can exchange a quantized version of $\xbf_i$ with its neighbors, as defined through a connected and undirected graph $\Gcal = (\Vcal,\Ecal)$, where $\Vcal = \{1,\ldots,n\}$ and  $\Ecal = (\Vcal\times\Vcal)$ are the vertex and edge sets, respectively. We denote by $\Ncal:= \{ j \in \Vcal\; |\; (i, j) \in\Ecal\}$ the set of node $i$'s neighbors. 

A concrete motivating example for this problem is distributed linear regression problems solved over a network of processors. Regression problems involving massive amounts of data are common in machine learning; see for example, \cite{SSBD2014, HTF2009}. Each function $f_i$ is the empirical loss over the local data stored at processor $i$. The objective is to minimize the total loss over the entire dataset. Due to the difficulty of storing the enormous amount of data at a central location, the processors perform local computations over the local data, which are then exchanged to arrive at the globally optimal solution. Distributed gradient methods are a natural choice to solve such problems since they have been observed to be both fast and easily parallelizable in the case where the processors can exchange data instantaneously. The goal of this paper is to show that the algorithm continues to be convergent even the nodes only exchange the quantized values of their variables due to the finite bandwidths shared between them. In particular, we derive expressions for the convergence rate as a function of the communication bandwidths and the underlying network topology. 

In the sequel, we will use $f^*$ to denote the optimal value of problem \eqref{prob:obj}, i.e., $f^* =\sum_{i=1}^n f_i(\xbf^*)$ where $\xbf^*$ is a solution of problem \eqref{prob:obj}. We denote by $\Xcal^*$ the solution set of problem \eqref{prob:obj}, which is nonempty due to the compactness of $\Xcal$. In addition, since $\Xcal$ is compact it is obvious that each $f_i$ is Lipschitz continuous with some positive constant $L_i$, as stated in the following proposition. 
\begin{prop}\label{prop:bounded_subg}
Each function $f_i$, for all $i\in\Vcal$, is $L_i$-Lipschitz continuous, i.e., Eq.\ \eqref{notation:Lipschitz} holds for some $L_i\geq0$ for all $i\in\Vcal$.
\end{prop}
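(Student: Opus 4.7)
The plan is to invoke the classical result that a real-valued convex function is Lipschitz on any compact subset of the interior of its (effective) domain. The subdifferential in the paper is defined with $\ybf$ ranging over all of $\Rset^d$, so I treat each $f_i$ as being (implicitly) extended to a proper convex function on an open set $U_i \supset \Xcal$; this is a necessary standing assumption, since a convex function on a compact set need not be Lipschitz without it (for instance, $-\sqrt{x}$ on $[0,1]$ is convex, continuous, and bounded but not Lipschitz at $0$).

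First I would fix $\delta > 0$ small enough that the compact set $\Xcal_\delta := \{\xbf \in \Rset^d : \text{dist}(\xbf,\Xcal) \le \delta\}$ lies inside $U_i$. Since a real-valued convex function on an open convex subset of $\Rset^d$ is automatically continuous (a standard fact in convex analysis), $f_i$ is continuous on $U_i$ and hence bounded on the compact set $\Xcal_\delta$ by some finite constant $M$.

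Next I would use $M$ to bound the subgradients uniformly over $\Xcal$. Fix $\xbf \in \Xcal$ and any nonzero $g \in \partial f_i(\xbf)$. Choosing $\ybf = \xbf + \delta g/\|g\|$, which lies in $\Xcal_\delta$, the subgradient inequality yields
\begin{align*}
\delta \|g\| \;=\; g^T(\ybf - \xbf) \;\le\; f_i(\ybf) - f_i(\xbf) \;\le\; 2M,
\end{align*}
so every subgradient satisfies $\|g\| \le 2M/\delta$. Set $L_i := 2M/\delta$.

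Finally I would derive the Lipschitz bound from this uniform subgradient estimate. For any $\xbf, \ybf \in \Xcal$, pick $g_\xbf \in \partial f_i(\xbf)$ and $g_\ybf \in \partial f_i(\ybf)$; applying the subgradient inequality at each point gives
\begin{align*}
g_\xbf^T(\ybf-\xbf) \;\le\; f_i(\ybf) - f_i(\xbf) \;\le\; g_\ybf^T(\ybf - \xbf),
\end{align*}
and Cauchy--Schwarz on both sides yields $|f_i(\ybf) - f_i(\xbf)| \le L_i \|\ybf - \xbf\|$, which is exactly \eqref{notation:Lipschitz}. The only genuine obstacle is the opening assumption that $\Xcal$ sits in the interior of each $f_i$'s domain; once this is folded into the standing setup, the rest is routine convex analysis.
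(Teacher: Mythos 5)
Your proof is correct and is the standard convex-analysis argument (local boundedness $\Rightarrow$ uniform subgradient bound $\Rightarrow$ Lipschitz continuity); the paper itself offers no proof, simply asserting the proposition as an obvious consequence of the compactness of $\Xcal$ and the convexity of each $f_i$. You are also right to flag the interiority caveat: since the paper's definition of $\partial f(\xbf)$ ranges over all $\ybf\in\Rset^d$, each $f_i$ is implicitly a finite convex function on $\Rset^d$, so $\Xcal$ lies in the interior of the domain and your argument goes through exactly as written.
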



\section{Distributed Gradient Methods Under Random Quantization}\label{sec:dither_quantization}
Distributed subgradient ({\sf DSG}) methods, Eq.\ \eqref{distributed:DSGD}, for solving problem \eqref{prob:obj} were first studied and analyzed rigorously in \cite{NedicO2009,NedicOP2010}. In these methods each node $i$ iteratively updates $\xbf_i$ as 
\begin{align}
\xbf_i(k+1) = \Pcal_{\Xcal}\left[\sum_{j\in\Ncal_i} a_{ij}\xbf_j(k) \; - \; \alpha(k) \gbf_i(\xbf_i(k))\right],\label{distributed:DSGD}
\end{align}  
where $\alpha(k)$ is some sequence of stepsizes and $\gbf_i(\xbf_i(k))\in\partial f_i(\xbf_i(k))$. Here, $a_{ij}$ is some positive weight which node $i$ assigns for $\xbf_j$. We assume that these weights, which capture the topology of $\Gcal$, satisfy the following condition. 
\begin{assump}\label{assump:doub_stoch}  
The matrix $\Abf$, whose $(i,j)$-th entries are $a_{ij}$, is doubly stochastic, i.e., $\sum_{i=1}^n a_{ij} =
\sum_{j=1}^n a_{ij} = 1$. Moreover, $\Abf$ is irreducible and aperiodic. Finally, the weights $a_{ij} > 0$ if and only if $(i, j) \in \Ecal$ otherwise $a_{ij} = 0$.
\end{assump} 
This assumption also implies that $\Abf$ has $1$ as the largest singular value and others are strictly less than $1$; see for example, the Perron-Frobenius theorem \cite{HJ1985}.  Also, we denote by $\sigma_2\in(0,1)$ the second largest singular value of $\Abf$, which by the Courant-Fisher theorem \cite{HJ1985} gives
\begin{align}
\left\|\Abf\left(\Ibf-\frac{1}{n}\1\1^T\right)\right\| \leq \sigma_2\left\|\Ibf-\frac{1}{n}\1\1^T\right\|.\label{const:sigma_2}
\end{align}

Our focus in this section is to study {\sf DSG} under random quantization in communication between nodes. In particular, at any iteration $k\geq0$ the nodes are only allowed to send and receive the quantized values of their local copies to their neighboring nodes. Due to such quantized communication, we modify the update in Eq.\ \eqref{distributed:DSGD}, that is, each node $i$ now considers the following update
\begin{align*}
\xbf_i(k+1) &= \Pcal_{\Xcal}\Bigg[ (1-\beta(k))\xbf_i(k) +\beta(k)\sum_{j\in\Ncal_i} a_{ij}\qbf_j(k)\notag\\
&\qquad\qquad - \alpha(k)  \gbf_i(\xbf_i(k))\Bigg],
\end{align*}
where $\qbf_j(k) = \Qcal(\xbf_j(k))$ given in Eq.\ \eqref{notation:random_operator}. Here, in addition to $\alpha(k)$ we introduce a new stepsize $\beta(k)$ due to the random quantization exchanged between nodes. 

This update has a simple interpretation. At any time $k\geq0$, each node $i$ first obtains the quantized value $q_i(k)$ of its value $x_i(k)$. Each node $i$ then formulates a convex combination between its value $x_i(k)$ and the weighted quantized value received from its neighbors $j\in\Ncal_j$, with the goal of seeking a consensus on their estimates. Each node then moves along the subgradients of its respective objective function to update its estimates, pushing the consensus point toward the optimal set $\Xcal^*$. The distributed subgradient algorithm under random quantization is formally stated in Algorithm \ref{alg:QDSG}.  

\subsection{The Role of $\beta(k)$}
We discuss in this section some aspects of the new stepsize $\beta(k)$. First, one can interpret Eq.\ \eqref{alg:xiUpdate} as a distributed two-time-scale stochastic algorithm \cite{borkar2008,KY2009,WangFL2017,KondaT2004}, where the first sum play the role of fast time scale while the gradient step is the slow time scale. Due to the random quantization, each node first uses the fast time scale to estimate the true average of their estimates. Each node then applies the gradient step to slowly push its estimate toward a solution of problem \eqref{prob:obj}. As will be seen, $\beta(k)$ will be chosen relatively larger as compared to $\alpha(k)$ to guarantee for the convergence of Algorithm \ref{alg:QDSG}. 

Second, it is obvious that when $\beta(k) = 1$, for all $k$, we recover the update in Eq.\ \eqref{distributed:DSGD}. In a sense, introducing $\beta(k)$ gives us one more freedom to design our algorithm, especially when dealing with communication constraints. This has also been observed in our previous works \cite{DoanBS2017,DoanBS2018} where we use a constant $\beta$ to study the impact of network latencies on the performance of distributed gradient methods. 

Finally, one can view $\beta(k)$, in addition to $a_{ij}$, is some weight which each node $i$ uses to indicate that it ``trusts" its own value $x_i$ more than the value $x_j$ received from its neighbor $j$. As will be seen, to guarantee the convergence of the algorithm we will let $\beta(k)$ go to zero at some proper rate, implying eventually node $i$ only uses its own value. 

\begin{algorithm}[t]
\caption{Distributed Subgradient Algorithm\quad Under Random Quantization}
1. \textbf{Initialize}: Each node $i$ arbitrarily initializes $\xbf_i(0)\in\Xcal$.\\
\smallskip
2. \textbf{Iteration}: For $k\geq 0$ each node $i$ implements
\smallskip
\begin{flalign}
\xbf_i(k+1) = \Pcal_{\Xcal}\Big[&
(1-\beta(k))\xbf_i(k) +\beta(k)\sum_{j\in\Ncal_i} a_{ij}\qbf_j(k)\notag\\
&\qquad - \alpha(k) \gbf_i(\xbf_i(k))\Big].
\label{alg:xiUpdate}
\end{flalign}
\label{alg:QDSG}
\end{algorithm}


\section{Convergence Results}\label{sec:analysis}
The focus of this section is to study the convergence properties of Algorithm \ref{alg:QDSG} for solving problem \eqref{prob:obj}, when the objective functions are both convex and strongly convex.  The key idea of our analysis is to utilize the standard techniques used in centralized subgradient methods and stochastic approximation approach. In particular, for convex objective functions we first show that $\xbf_i(k)$, for all $i\in\Vcal$, converges almost surely to a solution $x^*$ of problem \eqref{prob:obj} under some proper choice of stepsizes $\{\alpha(k),\beta(k)\}$. We next show the convergence of the function $f$ estimated at the time $\alpha-$weighted average of each $x_i$ to the optimal value $f^*$ in expectation at a rate $\mathcal{O}\left(n\Delta^2 L^2\ln(k)\,/\,(1-\sigma_2)^2k^{1/4}\right)$, where $1-\sigma_2$ is the spectral gap of the network connectivity. Finally, when the objective functions are strongly convex, we derive the convergence of the time-weighted average of each $\xbf_i$ to an optimal solution $\xbf^*$ of problem \eqref{prob:obj} in expectation at a rate $\mathcal{O}\left(n\Delta^2 L^2\ln(k)\,/\,(1-\sigma_2)^3k^{1/3}\right)$.  

We start our analysis by introducing more notation. Given the nodes' estimates $\xbf_1,\ldots,\xbf_n$ in $\Rset^{d}$ we denote by $\Xbf\in\Rset^{n\times d}$ a matrix whose $i$-th rows are $\xbf_i^T$, i.e.,
\begin{align*}
\Xbf = \left[\begin{array}{cc}
-\;\xbf_1^T\;-  \\
\cdots\\
-\;\xbf_n^T\;-
\end{array} \right]\in\Rset^{n\times d}\cdot    
\end{align*}
Let $\xbarbf\in\Rset^{d}$ be the average of $\xbf_i$, i.e., 
\begin{align*}
\xbarbf = \frac{1}{n}\sum_{i=1}^n\xbf_i = \Xbf^T\1 \in\Rset^{d}.    
\end{align*}
For convenience, we use the following notation
\begin{align*}
&\Gbf(\Xbf) =\left[\begin{array}{cc}
-\;\gbf_1^T(\xbf_1)\;-  \\
\cdots\\
-\;\gbf_n^T(\xbf_n)\;-
\end{array} \right]\in\Rset^{n\times d},\quad \ebf_i(k) = \qbf_i(k) - \xbf_i(k)\notag\\
&\Delta = \sum_{\ell=1}^d\Delta^{\ell},\quad L = \sum_{i=1}^nL_i,\quad \Wbf = \Ibf - \frac{1}{n}\1\1^T\\
&r(k) = \|\xbarbf(k) - \xbf^*\|^2,\quad\Ybf(k)=\Xbf(k)-\1\xbarbf^T(k)= \Wbf\Xbf(k).
\end{align*}
Moreover, let $\Fcal_k$ be the filtration containing all the history generated by Eq.\ \eqref{alg:xiUpdate} upto time $k$, i.e., $$\Fcal_k = \{\Xbf(0),\Qbf(0),\ldots,\Xbf(k),\Qbf(k)\}.$$ 
Finally, given a vector $\vbf\in\Rset^{d}$ let $\xibf$ denote the error due to the projection of $\vbf$ to $\Xcal$, i.e., 
\begin{align}
\xibf =  \vbf - \Pcal_{\Xcal}[\vbf].\label{notation:xi}
\end{align}
Thus, Eq.\ \eqref{alg:xiUpdate} now can be rewritten as
\begin{align}
\begin{aligned}
\vbf_i(k) &= (1-\beta(k))\xbf_i(k) + \beta(k)\sum_{j\in\Ncal_i}a_{ij}\qbf_j(k)\\ 
&\qquad\qquad- \alpha(k)\gbf_i(\xbf_i(k))\\
\xbf_i(k+1) &= \vbf_i(k) - \xibf_i(k),
\end{aligned}\label{analysis:xiUpdate}
\end{align}
which by using $\Abf$ the matrix form of Eq.\ \eqref{analysis:xiUpdate} is given as
\begin{align}
\begin{aligned}
\Vbf(k) &= (1-\beta(k))\Xbf(k) + \beta(k)\Abf\Qbf(k)- \alpha(k) \Gbf(\Xbf(k))\\ 
\Xbf(k+1) &= \Vbf(k) - \Xibf(k),
\end{aligned}\label{anlaysis:Xupdate}
\end{align}
where $\Xibf(k)\in\Rset^{n\times d}$ is the matrix whose $i$-th row is $\xibf_i^T(k)$.  In addition, since $\Abf$ is doubly stochastic, we have 
\begin{align}
\begin{aligned}
\vbarbf(k+1) &= (1-\beta(k))\xbarbf(k)  + \beta(k)\qbarbf(k)\\
&\qquad\qquad-\frac{\alpha(k)}{n}\sum_{i=1}^n \gbf_i(\xbf_i(k))\\ 
\xbarbf(k+1) &= \vbarbf(k) - \xibarbf(k).
\end{aligned}\label{analysis:xbar}
\end{align}

\subsection{Preliminaries}
\label{sec:preliminaries}
In this section, we consider some preliminary results, which are essential in our analysis given in the next section. For an ease of exposition, we delay the proofs of all results in this section to the appendix. However, we present a sketch of their proofs to explain some intuition behind our analysis 

We first provide an upper bound for the consensus error $\|\Ybf(k)\| = \|\Wbf\Xbf(k)\|$ in the following lemma. 

\begin{lem}\label{lem:consensus_bound}
Suppose that Assumption \ref{assump:doub_stoch} holds. Let the sequence $\{x_i(k)\}$, for all $i\in\Vcal$, be generated by Algorithm \ref{alg:QDSG}. In addition, let $\{\alpha(k),\beta(k)\}$ be two sequences of nonnegative and nonincreasing stepsizes. Then we have
\begin{align}
&\Eset[\|\Ybf(k+1)\|^2\,|\,\Fcal_k]\notag\\
&\quad \leq  (1-(1-\sigma_2)\beta(k))\|\Ybf(k)\|^2 \notag\\
&\quad\qquad  + n\sigma_2^2\Delta^2\beta^2(k)+\frac{4L^2(\beta(0)+1)}{(1-\sigma_2)}\frac{\alpha^2(k)}{\beta(k)}\cdot\label{lem_consensus:UpperBound}
\end{align}
Moreover, we also obtain
\begin{align}
&\sum_{t=0}^{k}\beta(t)\Eset[\,\|\Ybf(t)\|^2\,]\notag\\
&\leq \frac{\Eset[\,\|\Ybf(0)\|^2\,]}{1-\sigma_2}+\sum_{t=0}^{k}\frac{n\sigma_2^2\Delta^2\beta^2(t)}{1-\sigma_2} + \frac{4L^2(\beta(0)+1)\alpha^2(t)}{(1-\sigma_2)^2\beta(t)}\cdot\label{lem_consensus:UpperBound_rate}
\end{align}
\end{lem}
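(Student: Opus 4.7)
The plan is to bound $\Eset[\|\Ybf(k+1)\|^2\,|\,\Fcal_k]$ by reducing the projected update to a linear recursion on the subspace orthogonal to $\1$, where $\Abf$ acts as a $\sigma_2$-contraction. The first step is to dispose of the projection $\Pcal_\Xcal$. I would use the variational identity $\|\Ybf(k+1)\|^2 = \min_{\mathbf{u}\in\Rset^d}\sum_i\|\xbf_i(k+1)-\mathbf{u}\|^2$ and choose $\mathbf{u} = \Pcal_\Xcal[\vbarbf(k)]\in\Xcal$. Because $\xbf_i(k+1) = \Pcal_\Xcal[\vbf_i(k)]$, non-expansiveness of projection gives $\|\xbf_i(k+1)-\mathbf{u}\|\le\|\vbf_i(k)-\vbarbf(k)\|$, and summing over $i$ yields $\|\Ybf(k+1)\|^2 \le \|\Wbf\Vbf(k)\|^2$, which absorbs the projection error $\Xibf(k)$ without any further work.

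Next I would expand $\Wbf\Vbf(k)$. Using $\Wbf\Abf = \Abf\Wbf$ (from the double stochasticity of $\Abf$) together with $\Wbf\Xbf(k) = \Ybf(k)$ and $\Qbf(k) = \Xbf(k) + \Ebf(k)$,
$$\Wbf\Vbf(k) = [(1-\beta(k))\Ibf+\beta(k)\Abf]\Ybf(k) + \beta(k)\Abf\Wbf\Ebf(k) - \alpha(k)\Wbf\Gbf(\Xbf(k)).$$
Taking $\Eset[\,\cdot\,|\,\Fcal_k]$, the cross terms involving $\Ebf(k)$ vanish since $\Eset[\Ebf(k)\,|\,\Fcal_k] = 0$ by Eq.\ \eqref{notation:random_quantization}. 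For the noise term, the columns of $\Wbf\Ebf(k)$ are orthogonal to $\1$, so Eq.\ \eqref{const:sigma_2} gives $\|\Abf\Wbf\Ebf(k)\|\le\sigma_2\|\Ebf(k)\|$; combined with the a.s.\ bound $\|\Ebf(k)\|^2\le n\Delta^2$ (from $|q - x| \le \Delta$ in Eq.\ \eqref{notation:random_quantization}), this contributes the $n\sigma_2^2\Delta^2\beta^2(k)$ term. For the remaining $\Fcal_k$-measurable piece, the columns of $\Ybf(k)$ are also orthogonal to $\1$, so $\|[(1-\beta(k))\Ibf+\beta(k)\Abf]\Ybf(k)\|\le(1-(1-\sigma_2)\beta(k))\|\Ybf(k)\|$. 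I would then apply Young's inequality $\|a-b\|^2 \le (1+\eta)\|a\|^2 + (1+\eta^{-1})\|b\|^2$ with $\eta = \beta(k)(1-\sigma_2)$, using the elementary bound $(1+\eta)(1-\eta)^2 \le 1-\eta$ to keep the contraction factor at $1-(1-\sigma_2)\beta(k)$ and invoking Proposition~\ref{prop:bounded_subg} for $\|\Wbf\Gbf(\Xbf(k))\| \le L$. Collecting everything produces Eq.\ \eqref{lem_consensus:UpperBound}.

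The cumulative bound Eq.\ \eqref{lem_consensus:UpperBound_rate} follows by taking total expectation of Eq.\ \eqref{lem_consensus:UpperBound}, rearranging into the telescoping form
$$(1-\sigma_2)\beta(t)\Eset[\|\Ybf(t)\|^2]\le\Eset[\|\Ybf(t)\|^2]-\Eset[\|\Ybf(t+1)\|^2]+n\sigma_2^2\Delta^2\beta^2(t)+\frac{4L^2(\beta(0)+1)\alpha^2(t)}{(1-\sigma_2)\beta(t)},$$
summing over $t = 0,\dots,k$ (so the first two terms telescope and the nonpositive $-\Eset[\|\Ybf(k+1)\|^2]$ can be dropped), and dividing by $1-\sigma_2$.

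The main obstacle I anticipate is the projection-removal step. A naive triangle inequality on $\Ybf(k+1) = \Wbf\Vbf(k) - \Wbf\Xibf(k)$ leaves $\Xibf(k)$ to be controlled separately, which is painful because $\Xibf(k)$ depends on the full update through $\vbf_i(k)$. Recognizing that choosing the reference point $\mathbf{u} = \Pcal_\Xcal[\vbarbf(k)]$ rather than the natural minimizer $\xbarbf(k+1)$ lets projection non-expansiveness swallow $\Xibf(k)$ in a single inequality is the key trick. A secondary concern is calibrating $\eta$ in Young's inequality so that the contraction on $\|\Ybf(k)\|^2$ is preserved exactly at $1-(1-\sigma_2)\beta(k)$ while the gradient inflation stays $\Ocal(1/(\beta(k)(1-\sigma_2)))$; any loose choice either breaks the contraction or worsens the dependence on the spectral gap $1-\sigma_2$ that propagates into the final convergence rates.
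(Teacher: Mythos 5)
Your argument is correct and, apart from one step, follows the same skeleton as the paper's proof: write $\Wbf$ applied to the update, use the commutation $\Wbf\Abf=\Abf\Wbf$ and Eq.~\eqref{const:sigma_2} to contract the $\Ybf(k)$ part by $1-(1-\sigma_2)\beta(k)$, kill the cross terms with $\Ebf(k)$ by the zero-mean property of the dithered quantizer, bound $\|\Ebf(k)\|^2\le n\Delta^2$ and $\|\Wbf\Gbf\|\le L$, apply Young's inequality with $\eta=(1-\sigma_2)\beta(k)$ so that $(1+\eta)(1-\eta)^2\le 1-\eta$, and then telescope for Eq.~\eqref{lem_consensus:UpperBound_rate}. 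The genuine departure is your treatment of the projection. The paper keeps the projection error $\Xibf(k)$ as an explicit fourth term in the recursion for $\Ybf(k+1)$, splits $\alpha(k)\Wbf\Gbf(k)+\Wbf\Xibf(k)$ with a second Cauchy--Schwarz, and bounds $\|\xibf_i(k)\|\le L_i\alpha(k)$ via Lemma~\ref{apx_lem:projection}(b) applied with the feasible point $\ubf_i(k)=(1-\beta(k))\xbf_i(k)+\beta(k)\sum_j a_{ij}\qbf_j(k)\in\Xcal$; this is where the factor $4L^2$ comes from. You instead use $\|\Ybf(k+1)\|^2=\min_{\ubf}\sum_i\|\xbf_i(k+1)-\ubf\|^2$ with the reference point $\ubf=\Pcal_{\Xcal}[\vbarbf(k)]$ and non-expansiveness of $\Pcal_{\Xcal}$ to get $\|\Ybf(k+1)\|^2\le\|\Wbf\Vbf(k)\|^2$ in one stroke, eliminating $\Xibf(k)$ before any expansion. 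Your route is cleaner here and yields the slightly sharper constant $L^2(\beta(0)+1)$ in place of $4L^2(\beta(0)+1)$ (so it certainly implies the stated bound); the paper's more pedestrian bound $\|\xibf_i(k)\|\le L_i\alpha(k)$ is, however, reused in the proof of Lemma~\ref{lem:opt_dist}, so it is not wasted effort there. Everything else, including the telescoping derivation of Eq.~\eqref{lem_consensus:UpperBound_rate}, matches the paper.
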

\begin{proof}[Sketch of Proof]
To show Eq.\ \eqref{lem_consensus:UpperBound} we first use Eqs.\ \eqref{anlaysis:Xupdate} and \eqref{analysis:xbar} to have
\begin{align*}
\Ybf(k+1) &= \Wbf\Xbf(k+1)\notag\\
&= (1-\beta(k))\Ybf(k) + \beta(k)\Abf\Wbf\Qbf(k) -\alpha(k)\Wbf\Gbf(k) - \Wbf\Xibf(k).
\end{align*}
Next, we use the following Cauchy-Schwarz inequality with some $\eta > 0$ and $a,b\in\Rset$   
\begin{align*}
(a+b)^2\leq (1+\eta)a^2 + (1+1/\eta)b^2.
\end{align*}
Thus, by taking the $2$-norm square of the first equation and using the preceding Cauchy-Schwarz inequality with $\eta = 1 + (1-\sigma_2)\beta(k)$ we have
\begin{align}
&\|\Ybf(k+1)\|^2\notag\\ 
&\leq (1+(1-\sigma_2)\beta(k)) \left\|(1-\beta(k))\Ybf(k) + \beta(k)\Abf\Wbf\Qbf(k)\right\|^2\notag\\
&\qquad + \left(2+\frac{2}{(1-\sigma_2)\beta(k)}\right)\left\|\alpha(k)\Wbf\Gbf(k)\right\|^2\notag\\
&\qquad + \left(2+\frac{2}{(1-\sigma_2)\beta(k)}\right)\left\|\Wbf\Xibf(k)\right\|^2,\label{lem_consensus_analysis:Eq1}
\end{align}
We next analyze each term on the right-hand side of Eq.\ \eqref{lem_consensus_analysis:Eq1}. First, Proposition \ref{prop:bounded_subg} gives
\begin{align*}
\left\|\Wbf\Gbf(k)\right\|^2 \leq \|\Gbf(k)\|^2 \leq L^2.
\end{align*}
Second, using the projection lemma, Lemma \ref{apx_lem:projection}(b) in the Appendix, one can show
\begin{align*}
&\|\Wbf\Xibf(k)\|^2 \leq L^2\alpha^2(k). 
\end{align*}
Third, by Eq.\ \eqref{notation:random_quantization} we have
\begin{align*}
\|\Ebf(k)\|^2 &= \sum_{i=1}^{n}\|\xbf_i(k)-\qbf_i(k)\|^2\leq \sum_{i=1}^{n}\Delta^2 = n\Delta^2.
\end{align*}
Fourth, by Eq.\ \eqref{const:sigma_2} we have
\begin{align*}
&\|(1-\beta(k))\Ybf(k) + \beta(k)\Abf\Ybf(k)\|^2\notag\\
&\qquad \leq \|(1-(1-\sigma_2)\beta(k))\Ybf(k)\|.    \end{align*}
Thus, taking the conditional expectation of Eq.\ \eqref{lem_consensus:Eq2} w.r.t. $\Fcal_k$ and using the last four inequalities we obtain Eq.\ \eqref{lem_consensus:UpperBound}.

Finally, taking the expectation on both sides of Eq.\ \eqref{lem_consensus:UpperBound} and summing up over $k=0,\ldots,K$ for some $K$ immediately give Eq.\ \eqref{lem_consensus:UpperBound_rate}. 
\end{proof}

We next provide proper conditions on the stepsizes $\{\alpha(k),\beta(k)\}$, which guarantees that the nodes achieve a consensus on their estimates $\xbf_i$. The analysis of this lemma is a consequence of Lemma \ref{lem:consensus_bound} and Lemma \ref{lem:Martingale} on the almost supermartingale convergence theorem given later.    
\begin{lem}\label{lem:consensus}
Suppose that Assumption \ref{assump:doub_stoch} holds. Let the sequence $\{x_i(k)\}$, for all $i\in\Vcal$, be generated by Algorithm \ref{alg:QDSG}. In addition, let $\alpha(k)$ and $\beta(k)$ satisfy 
\begin{align}
\begin{aligned}
&\sum_{k=0}^{\infty} \alpha(k) =\sum_{k=0}^{\infty} \beta(k) = \infty,\quad\sum_{k=0}^{\infty} \frac{\alpha^2(k)}{\beta(k)} < \infty\\
&\sum_{k=0}^{\infty} \alpha^2(k) < \infty,\quad \sum_{k=0}^{\infty} \beta^2(k) < \infty. 
\end{aligned}\label{lem_consensus:stepsizes}
\end{align}
Then we have 
\begin{align}
\lim_{k\rightarrow\infty}\|\,\xbf_i(k)-\xbarbf(k)\,\| = 0\quad \text{a.s.,}\quad\text{for all}\; i\in\Vcal.\label{lem_consensus:AsympCov}
\end{align}
Furthermore, the following condition holds
\begin{align}
\sum_{k=0}^{\infty}\beta(k)\|\Ybf(k)\|^2 < \infty\qquad\text{a.s.} \label{lem_consensus:FiniteSum}
\end{align}
\end{lem}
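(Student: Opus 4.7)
The plan is to view the one-step bound in Eq.\ \eqref{lem_consensus:UpperBound} from Lemma \ref{lem:consensus_bound} as an almost supermartingale recursion and apply the Robbins–Siegmund theorem (Lemma \ref{lem:Martingale}). Specifically, I would set $V(k) = \|\Ybf(k)\|^2$, identify the ``negative drift'' term as $b(k) = (1-\sigma_2)\beta(k)\|\Ybf(k)\|^2$, and lump the two remaining terms into the noise
\begin{align*}
c(k) = n\sigma_2^2\Delta^2\beta^2(k) + \frac{4L^2(\beta(0)+1)}{1-\sigma_2}\,\frac{\alpha^2(k)}{\beta(k)},
\end{align*}
so that Eq.\ \eqref{lem_consensus:UpperBound} reads $\Eset[V(k+1)\,|\,\Fcal_k] \leq V(k) - b(k) + c(k)$.

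Next, I would verify the hypotheses of the Robbins–Siegmund theorem. The stepsize assumptions \eqref{lem_consensus:stepsizes} give $\sum_{k}\beta^2(k) < \infty$ and $\sum_{k}\alpha^2(k)/\beta(k) < \infty$, so $\sum_k c(k) < \infty$. The theorem then yields two conclusions simultaneously: first, that $V(k) = \|\Ybf(k)\|^2$ converges almost surely to a finite, nonnegative random variable $V^\star$; and second, that $\sum_{k=0}^{\infty}\beta(k)\|\Ybf(k)\|^2 < \infty$ almost surely, which is precisely Eq.\ \eqref{lem_consensus:FiniteSum}.

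Finally, I would combine these two conclusions with $\sum_{k}\beta(k) = \infty$ to pin down the limit. Indeed, if $V^\star > 0$ on some event of positive probability, then on that event $\beta(k)\|\Ybf(k)\|^2$ would be bounded below by a positive multiple of $\beta(k)$ for all large $k$, contradicting $\sum_k \beta(k)\|\Ybf(k)\|^2 < \infty$ a.s. Hence $V^\star = 0$ a.s., i.e., $\|\Ybf(k)\| \to 0$ a.s., and since $\|\xbf_i(k) - \xbarbf(k)\| \leq \|\Ybf(k)\|$ for every $i \in \Vcal$, the consensus claim \eqref{lem_consensus:AsympCov} follows. The main obstacle is the cross-coupling between the fast timescale $\beta(k)$ and the slow timescale $\alpha(k)$ visible through the term $\alpha^2(k)/\beta(k)$; this is exactly what the two-time-scale condition $\sum_k \alpha^2(k)/\beta(k) < \infty$ in \eqref{lem_consensus:stepsizes} is designed to tame, and everything else is a routine verification.
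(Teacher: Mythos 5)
Your proposal is correct and follows essentially the same route as the paper: both invoke the one-step recursion of Lemma \ref{lem:consensus_bound}, feed it to the Robbins--Siegmund theorem (Lemma \ref{lem:Martingale}) using the summability conditions in \eqref{lem_consensus:stepsizes}, and then use $\sum_k \beta(k)=\infty$ together with the a.s.\ convergence of $\|\Ybf(k)\|^2$ to force the limit to zero. Your write-up is in fact slightly more explicit than the paper's about why the limit must be zero and about the row-norm bound $\|\xbf_i(k)-\xbarbf(k)\|\leq\|\Ybf(k)\|$, but there is no substantive difference in approach.
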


\begin{remark}
One example of stepsizes $\{\alpha(k),\beta(k)\}$, which satisfies Eq.\ \eqref{lem_consensus:stepsizes}, can be chosen as follows
\begin{align}
\alpha(k) = \frac{1}{k+2},\quad \beta(k) = \frac{1}{(k+2)^s},\; \forall s\in\left(\frac{1}{2}\,,\,1\right).\label{stepsizes:example}
\end{align}
\end{remark}
Third, we study an upper bound for the optimal distance $\rbf(k) = \|\xbar(k)-\xbf^*\|^2$ in the following lemma. 
\begin{lem}\label{lem:opt_dist}
Suppose that Assumption \ref{assump:doub_stoch} holds. Let the sequence $\{x_i(k)\}$, for all $i\in\Vcal$, be generated by Algorithm \ref{alg:QDSG}. Let $\{\alpha(k),\beta(k)\}$ be two sequences of nonnegative and nonincreasing stepsizes with $\beta(0)<1$. Let $\xbf^*$ be a solution of problem \eqref{prob:obj}. Then we have
\begin{align}
& \Eset\left[\,r(k+1)\,|\,\Fcal_k\,\right]\notag\\ 
&\leq r(k) + \frac{6L^2\alpha^2(k)}{n(1-\beta(0))}  +\frac{2L^2\alpha^2(k)}{n\beta(k)} + \Delta^2\beta^2(k)\notag\\
&\qquad + \frac{2\beta(k)\|\Ybf(k)\|^2}{n}- \frac{2\alpha(k)}{n}\sum_{i=1}^n \gbf_i^T(x_i(k))\Big(\xbf_i(k)-\xbf^*\Big).\label{lem_opt_dist:Ineq}
\end{align}
\end{lem}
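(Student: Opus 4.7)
The plan is to upper bound $\|\bar{x}(k+1)-\xbf^*\|^2$ while keeping the coefficient on $r(k)$ exactly equal to $1$, isolating everything else as perturbation terms. My first step is a Jensen plus projection bound: since $\bar{x}(k+1)=\tfrac{1}{n}\sum_i \xbf_i(k+1)$ with $\xbf_i(k+1)=\Pcal_{\Xcal}[\vbf_i(k)]$ and $\xbf^*\in\Xcal$, convexity of $\|\cdot\|^2$ combined with the nonexpansiveness of $\Pcal_{\Xcal}$ gives $\|\bar{x}(k+1)-\xbf^*\|^2\le\tfrac{1}{n}\sum_i\|\vbf_i(k)-\xbf^*\|^2$, reducing the problem to a per-node analysis.

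For each $i$, I would use the convex-combination split $\vbf_i(k)-\xbf^*=(1-\beta(k))[\xbf_i(k)-\xbf^*-\tfrac{\alpha(k)}{1-\beta(k)}\gbf_i(\xbf_i(k))]+\beta(k)[\ubf_i(k)-\xbf^*]$, where $\ubf_i(k)=\sum_j a_{ij}\qbf_j(k)$. Applying convexity of $\|\cdot\|^2$ to this split yields a convex combination of two squared norms; expanding the first square exposes the ``useful'' convexity term $-2\alpha(k)\gbf_i^T(\xbf_i(k))(\xbf_i(k)-\xbf^*)$ together with $\tfrac{\alpha^2(k)\|\gbf_i\|^2}{1-\beta(k)}$. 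Summing over $i$ and using $\tfrac{1}{1-\beta(k)}\le\tfrac{1}{1-\beta(0)}$ together with $\sum_i\|\gbf_i\|^2\le L^2$ then produces the $\tfrac{L^2\alpha^2(k)}{n(1-\beta(0))}$-type contribution.

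For the second branch, I would write $\ubf_i(k)-\xbf^*=(\pbf_i(k)-\xbf^*)+\sum_j a_{ij}\ebf_j(k)$ with $\pbf_i(k)=\sum_j a_{ij}\xbf_j(k)\in\Xcal$ and $\ebf_j(k)=\qbf_j(k)-\xbf_j(k)$. Taking the conditional expectation and invoking Eq.~\eqref{notation:random_quantization} (zero mean and per-coordinate variance at most $\Delta^2/4$) kills the cross term, and the $\beta^2(k)$ factor that naturally multiplies $\Eset[\|\sum_j a_{ij}\ebf_j(k)\|^2\mid\Fcal_k]$ after the square expansion produces the $\Delta^2\beta^2(k)$ quantization term. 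Summing $\|\pbf_i(k)-\xbf^*\|^2$ over $i$ using double stochasticity (Assumption~\ref{assump:doub_stoch}) together with the spectral contraction $\|\Abf\Wbf Z\|\le\sigma_2\|\Ybf(k)\|$ from Eq.~\eqref{const:sigma_2} gives a bound of the form $nr(k)+\sigma_2^2\|\Ybf(k)\|^2$; the $\beta(k)r(k)$ piece then combines with $(1-\beta(k))r(k)$ from the first branch to preserve the coefficient $1$ on $r(k)$.

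The last step absorbs the remaining consensus-related residuals into the target: applying Cauchy-Schwarz $\sum_i L_i\|\xbf_i(k)-\bar{x}(k)\|\le L\|\Ybf(k)\|$ together with Young's inequality $2\alpha(k)L\|\Ybf(k)\|\le\beta(k)\|\Ybf(k)\|^2+\tfrac{L^2\alpha^2(k)}{\beta(k)}$ to the ``bridging'' residuals that arise whenever $\xbf_i(k)$ is swapped with $\bar{x}(k)$ or $\pbf_i(k)$ inside a subgradient inner product delivers the $\tfrac{\beta(k)\|\Ybf(k)\|^2}{n}$ and $\tfrac{L^2\alpha^2(k)}{n\beta(k)}$ terms. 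The main obstacle I anticipate is the careful bookkeeping that keeps the coefficient of $r(k)$ at exactly $1$ throughout the argument: any naive Young inequality applied directly to the raw decomposition $\bar{x}(k+1)=\bar{v}(k)-\bar{\xi}(k)$ inflates this coefficient by a multiplicative $(1+O(\beta(k)))$ and thereby breaks the recursive argument used later. The convex-combination split $(1-\beta(k))+\beta(k)=1$ avoids this inflation while still permitting the asymmetric Young parameters $\tfrac{1}{1-\beta(k)}$ and $\tfrac{1}{\beta(k)}$ needed to produce the two distinct denominators $1-\beta(0)$ and $\beta(k)$ that appear in Eq.~\eqref{lem_opt_dist:Ineq}.
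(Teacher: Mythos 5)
Your opening move --- Jensen's inequality followed by nonexpansiveness of the projection, giving $\|\xbarbf(k+1)-\xbf^*\|^2\le\frac{1}{n}\sum_i\|\vbf_i(k)-\xbf^*\|^2$ --- is where the argument breaks, and the damage cannot be repaired downstream. Once you pass to the per-node sum of squared distances, the quantity you must relate to at time $k$ is $\frac{1}{n}\sum_i\|\xbf_i(k)-\xbf^*\|^2$, and since $\sum_i(\xbf_i(k)-\xbarbf(k))=0$ this equals $r(k)+\frac{1}{n}\|\Ybf(k)\|^2$ \emph{exactly}. Tracing your two branches: the first contributes $(1-\beta(k))\bigl(r(k)+\frac{1}{n}\|\Ybf(k)\|^2\bigr)$, and the second (after double stochasticity and the contraction \eqref{const:sigma_2}) contributes $\beta(k)\bigl(r(k)+\frac{\sigma_2^2}{n}\|\Ybf(k)\|^2\bigr)$. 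The coefficient of $r(k)$ is indeed $1$, as you intend, but the coefficient of $\frac{1}{n}\|\Ybf(k)\|^2$ is $1-(1-\sigma_2^2)\beta(k)=\mathcal{O}(1)$, not the $2\beta(k)$ claimed in Eq.~\eqref{lem_opt_dist:Ineq}. This is fatal for the lemma's purpose: Theorem~\ref{thm:asymp_conv} feeds this bound into Lemma~\ref{lem:Martingale} using the fact that $\sum_k\beta(k)\|\Ybf(k)\|^2<\infty$ from Lemma~\ref{lem:consensus}; an \emph{unweighted} $\|\Ybf(k)\|^2$ term is not summable under the admissible stepsizes (by Lemma~\ref{lem:consensus_bound} the consensus error only decays at the rate of $\beta(k)$ itself). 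The paper avoids this entirely by never leaving the averaged dynamics: it expands $r(k+1)=\|\vbarbf(k)-\xibarbf(k)-\xbf^*\|^2$ from Eq.~\eqref{analysis:xbar} around $\xbarbf(k)-\xbf^*$ directly, so $r(k)$ appears with coefficient one and the consensus error enters only linearly, through terms such as $-\frac{2\alpha(k)}{n}\sum_i\gbf_i^T(\xbarbf(k)-\xbf_i(k))$ that carry a factor $\alpha(k)$, which Young's inequality then converts into $\frac{L^2\alpha^2(k)}{n\beta(k)}+\frac{\beta(k)\|\Ybf(k)\|^2}{n}$.

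A secondary issue: as written, your convex-combination split places $\ubf_i(k)=\sum_ja_{ij}\qbf_j(k)$ in the $\beta(k)$-weighted slot \emph{before} the quantization noise is separated out, so the variance term arrives as $\beta(k)\,\Eset[\|\sum_ja_{ij}\ebf_j(k)\|^2\mid\Fcal_k]=\mathcal{O}(\beta(k)\Delta^2)$ rather than $\mathcal{O}(\beta^2(k)\Delta^2)$ --- convexity of $\|\cdot\|^2$ supplies the weight $\beta(k)$ once, not squared. Since $\sum_k\beta(k)=\infty$, a $\beta(k)\Delta^2$ term would again defeat the supermartingale argument. This part is fixable by reordering: peel off the zero-mean increment $\beta(k)\sum_ja_{ij}\ebf_j(k)$ from $\vbf_i(k)$ first, kill the cross term with the conditional expectation (the remainder of $\vbf_i(k)$ is $\Fcal_k$-measurable), and only then apply the convex-combination split to the deterministic part. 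The Jensen issue above, however, forces a restart from the averaged recursion \eqref{analysis:xbar}, essentially as the paper does.
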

\begin{proof}[Sketch of Proof]
First, using Eq.\ \eqref{analysis:xbar} to have
\begin{align*}
&r(k+1) = \|\vbarbf(k)-\xibarbf(k)-\xbf^*\|^2\notag\\ 
&= \left\|\begin{array}{ll} (1-\beta(k))\xbarbf(k)-\xbf^* -\frac{\alpha(k)}{n}\sum_{i=1}^n \gbf_i(\xbf_i(k))\\ 
\qquad\qquad + \beta(k)\qbarbf(k)-\xibarbf(k)
\end{array}
\right\|^2,
\end{align*}
which by expanding the right-hand side and taking the conditional expectation w.r.t $\Fcal_k$ yields
\begin{align*}
&\Eset\left[\,r(k+1)\,|\Fcal_k\,\right]\notag\\
&= \left\|\xbarbf(k)-\xbf^*-\frac{\alpha(k)}{n}\sum_{i=1}^n \gbf_i(x_i(k))\right\|^2\notag\\ 
&\quad + \left\|\beta(k)\ebarbf(k)\right\|^2 + \left\|\xibarbf(k)\right\|^2  - 2\xibarbf^T(k)\Big(\xbarbf(k)-\xbf^*\Big)\notag\\ 
&\qquad + \frac{2\alpha(k)}{n}\xibarbf^T(k) \sum_{i=1}^n \gbf_i(\xbf_i(k)).
\end{align*}
The next step is to provide an upper bound for each term on the right-hand side of the preceding equation to obtain Eq.\ \eqref{lem_opt_dist:Ineq}. This step can be done in a similar concept of the one given in Lemma \ref{lem:consensus_bound}. 
\end{proof}
Finally, we utilize the result on almost supermartingale convergence studied in \cite{Robbins1971}, stated as follows.
\begin{lem}[\cite{Robbins1971}]\label{lem:Martingale}
Let $\{y(k)\},$ $\{z(k)\}$, $\{w(k)\}$, and $\{\gamma(k)\}$ be non-negative sequences of random variables and satisfy 
\begin{align*}
&\mathbb{E}\Big[\,y(k+1)\,|\,\mathcal{F}_{k}\,\Big] \leq (1+\gamma(k))y(k) - z(k) + w(k)\\
&\sum_{k=0}^\infty \gamma(k) < \infty \text{ a.s, }\quad \sum_{k=0}^\infty w(k) < \infty \text{ a.s},
\end{align*}
where $\mathcal{F}_k = \{y(0),\ldots,y(k)\}$, the history of $y$ up to time $k$. Then $\{y(k)\}$ converges a.s., and  $\sum_{k=0}^{\infty} z(k)<\infty$ a.s.
\end{lem}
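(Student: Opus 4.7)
The statement is the celebrated Robbins--Siegmund almost supermartingale convergence theorem. My approach is to massage the given stochastic inequality into a bona fide non-negative supermartingale and then invoke Doob's convergence theorem. Two obstructions must be handled along the way: the multiplicative drift $(1+\gamma(k))$, and the fact that the natural lower bound for the rearranged process depends on the full trajectory rather than the current filtration.

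\textbf{Rescaling to kill the multiplicative factor.} Set $\pi(k) := \prod_{j=0}^{k-1}(1+\gamma(j))^{-1}$, with $\pi(0)=1$. Because $\sum_k \gamma(k)<\infty$ a.s., $\pi(k)$ is nonincreasing in $(0,1]$ and converges almost surely to a strictly positive limit $\pi_\infty$. Putting $\tilde y(k) = \pi(k)y(k)$, $\tilde z(k) = \pi(k+1)z(k)$, $\tilde w(k) = \pi(k+1)w(k)$, and multiplying the hypothesis by the $\mathcal{F}_k$-measurable positive factor $\pi(k+1)$ I obtain
\begin{align*}
\mathbb{E}[\tilde y(k+1)\,|\,\mathcal{F}_k]\leq \tilde y(k)-\tilde z(k)+\tilde w(k).
\end{align*}

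\textbf{Supermartingale construction and localization.} Let $T(k)=\sum_{j<k}\tilde z(j)$ and $S(k)=\sum_{j<k}\tilde w(j)$; both are nondecreasing, nonnegative, and $\mathcal{F}_k$-measurable, and $S(k)\uparrow S_\infty<\infty$ a.s.\ since $\pi(k+1)\leq 1$ and $\sum_k w(k)<\infty$. Setting $M(k):=\tilde y(k)+T(k)-S(k)$, the rescaled inequality gives $\mathbb{E}[M(k+1)\,|\,\mathcal{F}_k]\leq M(k)$. The key technical obstacle is that the natural lower bound $M(k)\geq -S_\infty$ involves a quantity that is not $\mathcal{F}_k$-measurable, so the standard non-negative supermartingale convergence theorem does not apply verbatim. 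I handle this by localization: for $B>0$ introduce the stopping time $\tau_B=\inf\{k:S(k)>B\}$. The stopped process $M(k\wedge\tau_B)+B$ is then a non-negative supermartingale, hence converges a.s.\ by Doob's theorem. Because $\{\tau_B=\infty\}\uparrow \{S_\infty<\infty\}$ and the latter event has probability one, $M(k)$ itself converges a.s.\ to a finite limit $M_\infty$.

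\textbf{Extracting the two conclusions.} From the identity $\tilde y(k)+T(k)=M(k)+S(k)$, the right-hand side converges a.s., so the non-decreasing sequence $T(k)$ must be bounded above and therefore has a finite limit $T_\infty$ a.s. Consequently $\tilde y(k)=M(k)+S(k)-T(k)$ also converges a.s., and dividing by $\pi(k)\to\pi_\infty>0$ shows $y(k)$ converges a.s. For the summability of $z$, observe that $\pi(k+1)\geq\pi_\infty>0$ a.s., hence $\sum_k z(k)\leq T_\infty/\pi_\infty<\infty$ a.s. Everything outside the localization step is routine bookkeeping; the genuine subtlety is the non-adapted lower bound on $M(k)$, which the stopping-time truncation resolves cleanly.
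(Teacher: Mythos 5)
The paper does not prove this lemma at all; it is imported verbatim from Robbins and Siegmund \cite{Robbins1971} and used as a black box, so there is no in-paper argument to compare against. What you have written is, in substance, the classical Robbins--Siegmund proof: divide out the multiplicative drift via $\pi(k)=\prod_{j<k}(1+\gamma(j))^{-1}$ (finite positive limit because $\sum_k\gamma(k)<\infty$), compensate to form the supermartingale $M(k)=\tilde y(k)+T(k)-S(k)$, localize to restore nonnegativity, apply Doob, and read off both conclusions from monotonicity of $T$ and positivity of $\pi_\infty$. The logic is sound and the extraction step at the end is airtight. Two small points deserve care. First, the rescaling and the supermartingale property require $\pi(k+1)$, hence $\gamma(k)$ and $w(k)$, to be $\mathcal{F}_k$-measurable; the lemma as stated (with $\mathcal{F}_k$ generated only by $y(0),\dots,y(k)$) does not literally guarantee this, though it holds in every application in the paper, where $\gamma$ and $w$ are deterministic step-size expressions. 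Second, with $\tau_B=\inf\{k:S(k)>B\}$ the stopped process $M(k\wedge\tau_B)+B$ can dip below zero at time $\tau_B$ itself, since $S(\tau_B)$ may overshoot $B$; the standard repair is to use the predictable variant $\tau_B=\inf\{k:S(k+1)>B\}$ (a stopping time once $w(k)$ is $\mathcal{F}_k$-measurable), which forces $S(k\wedge\tau_B)\leq B$ throughout, or to observe that only the event $\{\tau_B=\infty\}$, on which $M(k)\geq -B$ for all $k$, is needed for the conclusion. Neither point threatens the proof; both are one-line fixes.
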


\subsection{Convergence Results of Convex Functions}
In this section, we study the convergence and the rate of convergence of Algorithm \ref{alg:QDSG} when the objective functions $f_i$ are convex. For an ease of explanation, we only provide a sketch of the proofs for all the main results in this section and the next section, where their details are presented in Section \ref{sec:proof_main_results}.  

Our first main result is to show that if the stepsizes $\{\alpha(k),\beta(k)\}$ satisfy Eq.\ \eqref{lem_consensus:stepsizes}, then $\xbf_i(k)$, for all $i\in\Vcal$, converges almost surely to $\xbf^*$, a solution of problem \eqref{prob:obj}. The following theorem is states this result.  
\begin{thm}\label{thm:asymp_conv}
Suppose that Assumption \ref{assump:doub_stoch} holds. Let the sequence $\{\xbf_i(k)\}$, for all $i\in\Vcal$, be generated by Algorithm \ref{alg:QDSG}. Let $\{\alpha(k),\beta(k)\}$ be two sequences of nonnegative and nonincreasing stepsizes satisfying Eq.\ \eqref{lem_consensus:stepsizes} with $\beta(0)<1$, e.g., Eq.\ \eqref{stepsizes:example} holds. 
Then we have
\begin{align}
\lim_{k\rightarrow\infty} \xbf_i(k)  = \widetilde{\xbf} \qquad\text{a.s.,}\qquad \text{for all } i\in\Vcal,\label{thm_opt:asympconv} 
\end{align}
for some  $\widetilde{\xbf}$ that is a solution of Problem \eqref{prob:obj}.
\end{thm}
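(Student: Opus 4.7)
My plan is to apply the Robbins--Siegmund almost supermartingale theorem (Lemma~\ref{lem:Martingale}) to $r(k)=\|\xbarbf(k)-\xbf^*\|^2$ using Lemma~\ref{lem:opt_dist}, then to promote the resulting subsequential convergence to a full pointwise limit by compactness of $\Xcal$, and finally to lift the conclusion from $\xbarbf(k)$ to each $\xbf_i(k)$ via Lemma~\ref{lem:consensus}.

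The first step massages Lemma~\ref{lem:opt_dist} into Robbins--Siegmund form. Convexity of each $f_i$ gives $\gbf_i^T(\xbf_i(k))(\xbf_i(k)-\xbf^*) \geq f_i(\xbf_i(k)) - f_i(\xbf^*)$, and Proposition~\ref{prop:bounded_subg} combined with Cauchy--Schwarz yields $\sum_{i=1}^n\bigl(f_i(\xbarbf(k))-f_i(\xbf_i(k))\bigr)\leq L\|\Ybf(k)\|$. After applying Young's inequality
\[
\tfrac{2\alpha(k)L}{n}\|\Ybf(k)\| \;\leq\; \tfrac{L^2\alpha^2(k)}{n^2\beta(k)} + \beta(k)\|\Ybf(k)\|^2,
\]
Lemma~\ref{lem:opt_dist} rearranges to
\[
\Eset[r(k+1)\mid\Fcal_k] \leq r(k) - \tfrac{2\alpha(k)}{n}\bigl[f(\xbarbf(k))-f^*\bigr] + w(k),
\]
where $w(k)$ collects nonnegative terms of order $\alpha^2(k)$, $\beta^2(k)$, $\alpha^2(k)/\beta(k)$ and $\beta(k)\|\Ybf(k)\|^2$. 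Conditions~\eqref{lem_consensus:stepsizes} together with Lemma~\ref{lem:consensus} give $\sum_k w(k)<\infty$ a.s.; moreover $f(\xbarbf(k))\geq f^*$ because $\xbarbf(k)\in\Xcal$ by convexity of $\Xcal$. Lemma~\ref{lem:Martingale} with $\gamma\equiv 0$, $y(k)=r(k)$ and $z(k)=\tfrac{2\alpha(k)}{n}[f(\xbarbf(k))-f^*]\geq 0$ then delivers (i) $\|\xbarbf(k)-\xbf^*\|^2$ converges a.s., and (ii) $\sum_k\alpha(k)[f(\xbarbf(k))-f^*]<\infty$ a.s. Since $\sum_k\alpha(k)=\infty$, (ii) forces $\liminf_k f(\xbarbf(k))=f^*$, and compactness of $\Xcal$ plus continuity of $f$ produces a subsequence $\xbarbf(k_j)\to\widetilde{\xbf}\in\Xcal^*$.

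The main obstacle in the final step is that (i) is only established on an event of probability one that a priori depends on $\xbf^*$, whereas $\widetilde{\xbf}$ is itself random. I would circumvent this by rerunning the entire argument simultaneously for every point of a countable dense subset $\{\ybf_m\}\subset\Xcal^*$, obtaining one probability-one event on which $\lim_k\|\xbarbf(k)-\ybf_m\|^2$ exists for every $m$. If $\xbarbf(k)$ admitted two subsequential limits $\widetilde{\xbf},\widetilde{\xbf}'\in\Xcal^*$, the existence of the common limit would force $\|\widetilde{\xbf}-\ybf_m\|=\|\widetilde{\xbf}'-\ybf_m\|$ for every $m$; density (letting $\ybf_m\to\widetilde{\xbf}$) then gives $\widetilde{\xbf}=\widetilde{\xbf}'$. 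Hence $\xbarbf(k)\to\widetilde{\xbf}$ a.s., and Lemma~\ref{lem:consensus} converts this into $\xbf_i(k)\to\widetilde{\xbf}$ a.s.\ for every $i\in\Vcal$. All remaining manipulations are mechanical given Lemmas~\ref{lem:consensus_bound}--\ref{lem:opt_dist}.
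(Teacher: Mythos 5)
Your proposal is correct and follows essentially the same route as the paper's proof: convexity plus the Lipschitz/Young bounds turn Lemma~\ref{lem:opt_dist} into a Robbins--Siegmund recursion, Lemma~\ref{lem:Martingale} yields a.s.\ convergence of $r(k)$ and summability of $\alpha(k)[f(\xbarbf(k))-f^*]$, a subsequence argument identifies a minimizer $\widetilde{\xbf}$, and Lemma~\ref{lem:consensus} transfers the limit to each $\xbf_i(k)$. Your additional step of intersecting the probability-one events over a countable dense subset of $\Xcal^*$ actually patches a gap the paper glosses over (the null set in ``$r(k)$ converges a.s.\ for each $\xbf^*$'' depends on $\xbf^*$, while $\widetilde{\xbf}$ is random), so your version is, if anything, more rigorous.
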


\begin{proof}[Proof Sketch]
The main idea of this proof is first using the convexity of the functions $f_i$ into Eq.\ \eqref{lem_opt_dist:Ineq} in Lemma \ref{lem:opt_dist} to obtain 
\begin{align}
\Eset[r(k+1)\,|\,\Fcal_k] &\leq r(k) + \mathcal{O}\left(\alpha^2(k) + \beta^2(k) + \frac{\alpha^2(k)}{\beta(k)}\right)\notag\\    
&\qquad + \frac{3}{n} \beta(k)\|\Ybf(k)\|^2 -\frac{2\alpha(k)}{n}\Big(f(\xbarbf(k))-f^*\Big).\label{thm_opt:Eq1_Sketch} 
\end{align}
Second, since the stepsizes $\{\alpha(k),\,\beta(k)\}$ satisfy the conditions in Eq.\ \eqref{lem_consensus:stepsizes}, Eq.\ \eqref{lem_consensus:FiniteSum} holds. Thus, we can now apply Lemma \ref{lem:Martingale} to the preceding relation to have
\begin{align*}
&\lim_{k\rightarrow\infty}r(k)\qquad \text{ exits a.s. for each } \xbf^*\\ &\sum_{k=0}^\infty\alpha(k)\left(f(\xbarbf(k))  - f^*\right) < \infty\qquad \text{a.s.}
\end{align*}
Thus, using these relations and standard analysis on the convergence of subsequence of $\{\xbarbf(k)\}$ we can obtain Eq.\ \eqref{thm_opt:asympconv}.   
\end{proof}
We now study the rate of convergence of Algorithm \ref{alg:QDSG} to the optimal value in expectation, where  we utilize a similar technique used to establish the convergence rate of centralized subgradient methods. In particular, we show that if each node $i$ maintains a variable $\zbf_i$ used to estimate the time $\alpha-$weighted average of its local copy $\xbf_i$, then the function value $f$ estimated at each $\zbf_i$ converges in expectation to the optimal value $f^*$
with a rate $\mathcal{O}\left(n\Delta^2L^2\ln(k)\,/\,(1-\sigma_2)^2)k^{1/4}\right)$. The dependence on the variance $\Delta^2$ of the quantized error in the upper bound of the rate is natural, as we often observe in stochastic gradient descent where such dependence is on the variance of the gradient noise. Such result is derived under different assumptions on the stepsizes $\{\alpha(k),\beta(k)\}$ as shown in the following theorem\footnote{We note that the conditions on the stepsizes in Theorems \ref{thm:asymp_conv} and \ref{thm:rate_conv} are common choices to derive the asymptotic convergence and the rate of centralized subgradient methods, respectively; see for example \cite{Nesterov2004}.}. Note that while the previous theorem studies almost sure convergence of the local copies, this theorem studies convergence in expectation of the functional value, and so it is not surprising that the stepsizes are different.

\begin{thm}\label{thm:rate_conv}
Suppose that Assumption \ref{assump:doub_stoch} holds. Let the sequence $\{x_i(k)\}$, for all $i\in\Vcal$, be generated by Algorithm \ref{alg:QDSG}. Let  $\{\alpha(k),\beta(k)\}$ be defined as 
\begin{align}
\alpha(k) = \frac{1}{(k+2)^{3/4}},\qquad\beta(k) = \frac{1}{(k+2)^{1/2}}\cdot\label{thm_rate_conv:stepsizes}
\end{align}
In addition, suppose that each node $i$ maintains a variable $z_i$ initialized arbitrarily in $\Xcal$ and updated as 
\begin{align*}
\zbf_i(k) = \frac{\sum_{t=0}^{k}\alpha(t)\xbf_i(t)}{\sum_{t=0}^{k}\alpha(t)}\cdot
\end{align*}
Then we have for all $i\in\Vcal$ and $k\geq0$
\begin{align}
&\Eset\left[\,f(\zbf_i(k))\,\right] - f^*\notag\\
&\leq \left\{
\begin{array}{ll}
\frac{n\Eset\left[\,r(0)\,\right]}{8}  +\frac{n\Eset[\,\|\Ybf(0)\|^2\,]}{2(1-\sigma_2)}+ 16L^2\\
+\frac{9nL^2(1+\ln(K+2))}{2(1-\sigma)^2}+\frac{5n^2\Delta^2(1+\ln(K+2)}{8}
\end{array}\right\}\times \frac{1}{(k+1)^{1/4}}\cdot\label{thm_rate_conv:Ineq} 
\end{align}
\end{thm}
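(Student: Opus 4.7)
The plan is to follow the standard ``subgradient plus consensus'' recipe, starting from the one-step bound on $r(k)=\|\xbarbf(k)-\xbf^*\|^2$ in Lemma \ref{lem:opt_dist} and turning it into an averaged bound using the specific stepsizes. First, I would convert the subgradient inner product in Eq.\ \eqref{lem_opt_dist:Ineq} into a functional gap. Since each $f_i$ is convex, $\gbf_i(\xbf_i(k))^T(\xbf_i(k)-\xbf^*) \geq f_i(\xbf_i(k)) - f_i(\xbf^*)$, and since each $f_i$ is $L_i$-Lipschitz (Proposition \ref{prop:bounded_subg}), $f_i(\xbf_i(k)) \geq f_i(\xbarbf(k)) - L_i\|\xbf_i(k)-\xbarbf(k)\|$, so that $\sum_i \gbf_i(\xbf_i(k))^T(\xbf_i(k)-\xbf^*) \geq f(\xbarbf(k)) - f^* - L\|\Ybf(k)\|$. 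Plugging in and absorbing the new cross term $2\alpha(k)L\|\Ybf(k)\|/n$ via AM--GM $2\alpha(k)L\|\Ybf(k)\|/n \leq L^2\alpha^2(k)/(n\beta(k)) + \beta(k)\|\Ybf(k)\|^2/n$ yields a recursion of the shape
\[
\Eset[r(k{+}1)\,|\,\Fcal_k] + \tfrac{2\alpha(k)}{n}(f(\xbarbf(k))-f^*) \leq r(k) + C_1\tfrac{\alpha^2(k)}{\beta(k)} + C_2\alpha^2(k) + \Delta^2\beta^2(k) + \tfrac{3}{n}\beta(k)\|\Ybf(k)\|^2,
\]
for explicit constants $C_1,C_2$ depending on $L,n,\beta(0)$.

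Next I would take the total expectation, telescope by summing $t=0,\ldots,K$, drop the nonnegative $\Eset[r(K{+}1)]$ on the left, and multiply through by $n/2$ to obtain
\[
\sum_{t=0}^{K}\alpha(t)\,\Eset[\,f(\xbarbf(t))-f^*\,] \leq \tfrac{n}{2}\Eset[r(0)] + \tfrac{n}{2}\sum_{t=0}^K\!\Big(C_1\tfrac{\alpha^2(t)}{\beta(t)}+C_2\alpha^2(t)+\Delta^2\beta^2(t)\Big) + \tfrac{3}{2}\sum_{t=0}^K\beta(t)\,\Eset[\|\Ybf(t)\|^2].
\]
The last sum is exactly what Lemma \ref{lem:consensus_bound} bounds in Eq.\ \eqref{lem_consensus:UpperBound_rate}, producing an additional $\Eset[\|\Ybf(0)\|^2]/(1-\sigma_2)$ plus further $\beta^2$ and $\alpha^2/\beta$ tail sums, each scaled by $1/(1-\sigma_2)$ or $1/(1-\sigma_2)^2$. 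With the choice $\alpha(t)=(t{+}2)^{-3/4}$, $\beta(t)=(t{+}2)^{-1/2}$, one checks $\sum_{t=0}^K\alpha^2(t)=\mathcal{O}(1)$, $\sum_{t=0}^K\beta^2(t)=\mathcal{O}(\ln(K{+}2))$, $\sum_{t=0}^K\alpha^2(t)/\beta(t)=\sum_{t=0}^K(t{+}2)^{-1}=\mathcal{O}(\ln(K{+}2))$, and on the left $\sum_{t=0}^K\alpha(t)\geq \int_0^K(t{+}2)^{-3/4}dt \geq 4((K{+}2)^{1/4}-2^{1/4})$, which after a minor numerical estimate gives $\sum_{t=0}^K\alpha(t)\geq c(K{+}1)^{1/4}$ for a small explicit $c$.

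The final step is to pass from the running sum involving $f(\xbarbf(t))$ to $f(\zbf_i(K))$. Since $\zbf_i(K)=\sum_{t=0}^K\alpha(t)\xbf_i(t)\big/\sum_{t=0}^K\alpha(t)$, convexity of $f$ (Jensen) gives $f(\zbf_i(K)) \leq \sum_t\alpha(t)f(\xbf_i(t))\big/\sum_t\alpha(t)$, and Lipschitz continuity gives $f(\xbf_i(t))\leq f(\xbarbf(t))+L\|\xbf_i(t)-\xbarbf(t)\|\leq f(\xbarbf(t))+L\|\Ybf(t)\|$. Thus
\[
\Eset[f(\zbf_i(K))]-f^* \leq \frac{\sum_{t=0}^K\alpha(t)\,\Eset[f(\xbarbf(t))-f^*]}{\sum_{t=0}^K\alpha(t)} + \frac{L\sum_{t=0}^K\alpha(t)\,\Eset\|\Ybf(t)\|}{\sum_{t=0}^K\alpha(t)}.
\]
The first ratio is bounded by the sum estimate above divided by $(K{+}1)^{1/4}$. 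For the second, Cauchy--Schwarz in $t$ gives $\sum_t\alpha(t)\|\Ybf(t)\|\leq \bigl(\sum_t\alpha^2(t)/\beta(t)\bigr)^{1/2}\bigl(\sum_t\beta(t)\|\Ybf(t)\|^2\bigr)^{1/2}$, each factor of which is $\mathcal{O}(\sqrt{\ln(K{+}2)})$ in expectation after one more Jensen (or one can take expectation first), so the second term is also $\mathcal{O}(\ln(K{+}2)/(K{+}1)^{1/4})$.

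The main obstacle is the accounting: tracking every constant through the AM--GM split, through Lemma \ref{lem:consensus_bound}, and through the Jensen/Lipschitz conversion in order to recover exactly the five-term bracketed constant shown in Eq.\ \eqref{thm_rate_conv:Ineq}. In particular one has to be careful that the $\alpha^2/\beta$ contribution from Lemma \ref{lem:consensus_bound} gives the $1/(1-\sigma_2)^2$ factor on the $L^2\ln(k{+}2)$ term, and that the $\sigma_2^2\beta^2\Delta^2$ contribution of Lemma \ref{lem:consensus_bound} (times $\beta$ again) yields the $n^2\Delta^2\ln(k{+}2)$ term, while the other $\Delta^2\beta^2$ contribution from Lemma \ref{lem:opt_dist} only contributes at a smaller order. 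Everything else is a routine combination of the prior lemmas with the two standard tools (convexity for Jensen, Lipschitz for the last-mile bound from $\xbarbf$ to $\zbf_i$).
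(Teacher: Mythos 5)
Your proposal is correct and follows essentially the same route as the paper: Lemma \ref{lem:opt_dist} plus convexity and Lipschitz continuity to produce the functional gap, telescoping, Eq.\ \eqref{lem_consensus:UpperBound_rate} for the consensus sum, the integral test for the stepsize sums, and Jensen's inequality for the weighted average $\zbf_i$. The only divergence is the last-mile passage from $f(\xbarbf(t))$ to $f(\xbf_i(t))$: the paper absorbs the error term $2\alpha(k)L_i\|\xbarbf(k)-\xbf_\ell(k)\|/n$ pointwise in $k$ via AM--GM (adding one more $\beta(k)\|\Ybf(k)\|^2/n$ and $L^2\alpha^2(k)/(n\beta(k))$ to the recursion before summing), whereas you bound $\sum_t\alpha(t)\Eset\|\Ybf(t)\|$ after summation by Cauchy--Schwarz over $t$; both yield the same $\mathcal{O}(\ln(K+2)/(K+2)^{1/4})$ contribution, differing only in the explicit constants.
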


\begin{proof}[Proof Sketch]
The analysis of this theorem is divided into there main steps. First, we fix some $\ell\in\Vcal$, and utilize Eq.\ \eqref{thm_opt:Eq1_Sketch} and Proposition \ref{prop:bounded_subg} to have   
\begin{align*}
\Eset[r(k+1)\,|\,\Fcal_k] &\leq r(k) + \mathcal{O}\left(\alpha^2(k) + \beta^2(k) + \frac{\alpha^2(k)}{\beta(k)}\right)\\    
&\qquad + \frac{4}{n} \beta(k)\|\Ybf(k)\|^2 -\frac{2\alpha(k)}{n}\Big(f(\xbf_{\ell}(k))-f^*\Big).    
\end{align*}
Second, we utilize Eq.\ \eqref{lem_consensus:UpperBound_rate} to obtain the following for some $K>0$
\begin{align*}
\sum_{k=0}^{K}\beta(k)\Eset[\,\|\Ybf(k)\|^2\,] \leq \mathcal{O}\left(\sum_{k=0}^{K}\beta^2(k)+\frac{\alpha^2(k)}{\beta(k)}\right).   
\end{align*}
Third, using the preceding equation and the integral test with $\alpha(k),\beta(k)$ in Eq.\ \eqref{thm_rate_conv:stepsizes} into step $1$ with some algebraic manipulation immmediately gives us Eq.\ \eqref{thm_rate_conv:Ineq}.  
\end{proof}

\begin{remark}
We note that $\zbf_i(k)$ in Theorem \ref{thm:rate_conv} can be iteratively updated as follows
\begin{align*}
\zbf_i(k+1) = \frac{\alpha(k)\xbf_i(k)+S(k)\zbf_i(k)}{S(k+1)},
\end{align*}
where $S(0) = 0$ and $S(k+1) = \sum_{t=0}^{k}\alpha(t)$ for $k\geq1$.
\end{remark}

\subsection{Convergence Results of Strongly Convex Functions}
We study here the convergence rate of Algorithm \ref{alg:QDSG} when $f_i$ are strongly convex, that is, we consider the following assumption.
\begin{assump}\label{assump:sc}
Each function $f_i$, for all $i\in\Vcal$, is $\mu_i$-strongly convex, i.e., Eq.\ \eqref{notation:sc} holds for some $\mu_i\geq0$.
\end{assump}
Note that this assumption implies that $f$ is $\mu-$strongly convex where $\mu = \min_{i}\mu_i$. Under this assumption, we show the rate of convergence of Algorithm \ref{alg:QDSG} to an optimal solution $\xbf^*$ of problem \eqref{prob:obj} in expectation, stated in the following theorem. 
\begin{thm}\label{thm_rate_conv_sc}
Suppose that Assumptions \ref{assump:doub_stoch} and \ref{assump:sc} hold. Let the sequence $\{x_i(k)\}$, for all $i\in\Vcal$, be generated by Algorithm \ref{alg:QDSG}. Let $\xbf^*$ be a solution of problem \eqref{prob:obj} and $\{\alpha(k),\beta(k)\}$ be defined as 
\begin{align}
\begin{aligned}
&\alpha(k) = \frac{a}{k+2}\qquad\qquad \text{for }a \geq \frac{1}{\mu}\\
&\beta(k) = \frac{b}{(k+2)^{2/3}}\qquad \text{for } b\geq \frac{1}{1-\sigma_2}\cdot
\end{aligned}\label{thm_rate_conv_sc:stepsizes}
\end{align}
In addition, suppose that each node $i$ maintains a variable $\zbf_i$ initialized arbitrarily and updated as 
\begin{align*}
\zbf_i(k) = \frac{\sum_{t=0}^{k}\xbf_i(t)}{k+1}\cdot
\end{align*}
Then we have for all $i\in\Vcal$ and $k\geq0$
\begin{align}
&\Eset \Big[\,\|\zbf_{i}(k) - \xbf^*\|^2\Big]\notag\\
&\leq \left\{\begin{array}{ll}
n\Eset[\,r(0)\,] + \frac{4n\Eset[\,\|\Ybf(0)\|\,](1+\ln(k+2))}{(1-\sigma_2)}\\
+ \frac{6L^2\alpha^2(0)(1+\ln(k+2))}{1-\beta(0)}
\end{array}\right\}\times\frac{1}{k+2} \notag\\
&\quad +\left\{\begin{array}{ll}
n\Delta^2\beta^2(0) +  \frac{4L^2\alpha^2(0)}{\beta(0)\mu} \\
+ \frac{8n^2\sigma_2^2\Delta^2\beta^2(0)}{(1-\sigma_2)^2} + \frac{27nL^2\alpha(0)}{\beta(0)(1-\sigma_2)^3}
\end{array}\right\}\times\frac{1}{(k+2)^{1/3}}\cdot\label{thm_rate_conv_sc:Ineq}
\end{align}
\end{thm}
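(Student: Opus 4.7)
My plan follows the standard recipe for strongly convex stochastic (sub)gradient methods with time-averaged iterates, modified to handle the decentralized consensus dynamics. The target rate $\mathcal{O}(1/(k+2)^{1/3})$ suggests that each of the cumulative sums $\sum_{t=0}^k \Eset[\|\Ybf(t)\|^2]$ and $\sum_{t=0}^k \Eset[r(t)]$ should be $\mathcal{O}((k+2)^{2/3})$. By Jensen's inequality applied to the Cesàro-average definition of $\zbf_i(k)$, combined with $\|\xbf_i(t)-\xbf^*\|^2 \leq 2\|\Ybf(t)\|^2 + 2r(t)$ (using $\|\xbf_i(t)-\xbarbf(t)\|^2 \leq \|\Ybf(t)\|^2$), I obtain
\[
\Eset[\|\zbf_i(k)-\xbf^*\|^2] \leq \frac{2}{k+1}\sum_{t=0}^k\bigl(\Eset[\|\Ybf(t)\|^2]+\Eset[r(t)]\bigr),
\]
so the problem reduces to the two cumulative-sum bounds.

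\textbf{Cumulative consensus bound.} The first sum follows almost directly from Lemma~\ref{lem:consensus_bound}. With $\alpha(t)=a/(t+2)$ and $\beta(t)=b/(t+2)^{2/3}$, both $\beta^2(t)$ and $\alpha^2(t)/\beta(t)$ are of order $1/(t+2)^{4/3}$ and hence summable, so \eqref{lem_consensus:UpperBound_rate} gives $\sum_t \beta(t)\Eset[\|\Ybf(t)\|^2] = \mathcal{O}(1)$. Since $\beta$ is non-increasing, $\beta(k)\sum_{t=0}^k\Eset[\|\Ybf(t)\|^2] \leq \sum_t \beta(t)\Eset[\|\Ybf(t)\|^2]$, yielding $\sum_{t=0}^k \Eset[\|\Ybf(t)\|^2] = \mathcal{O}(1/\beta(k)) = \mathcal{O}((k+2)^{2/3})$, as required.

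\textbf{Cumulative optimality bound via strong convexity and telescoping.} For the second sum, the $\mu_i$-strong convexity of each $f_i$ combined with $\sum_i\|\xbf_i(t)-\xbf^*\|^2 = nr(t)+\|\Ybf(t)\|^2$ and the $L$-Lipschitz control $\sum_i(f_i(\xbf_i(t))-f_i(\xbf^*)) \geq -L\|\Ybf(t)\|$ (using $f(\xbarbf(t))\geq f^*$ since $\xbarbf(t)\in\Xcal$) yields
\[
\sum_i \gbf_i^T(\xbf_i(t))(\xbf_i(t)-\xbf^*) \geq \tfrac{n\mu}{2}r(t) + \tfrac{\mu}{2}\|\Ybf(t)\|^2 - L\|\Ybf(t)\|.
\]
Substituting into \eqref{lem_opt_dist:Ineq} and using Young's inequality to split $L\alpha(t)\|\Ybf(t)\|$ into pieces of order $\alpha^2(t)/\beta(t)$ and $\beta(t)\|\Ybf(t)\|^2$ gives
\[
\Eset[r(t+1)\,|\,\Fcal_t] \leq (1-\mu\alpha(t))r(t) + E(t) + \tfrac{3\beta(t)}{n}\|\Ybf(t)\|^2,
\]
with $E(t)=\mathcal{O}(\alpha^2(t)+\alpha^2(t)/\beta(t)+\beta^2(t))$. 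Since $a\mu\geq 1$, $\mu\alpha(t)(t+2)\geq 1$, so $(t+2)(1-\mu\alpha(t)) \leq t+1$. Setting $v(t)=(t+1)r(t)$ and telescoping,
\[
(k+2)\Eset[r(k+1)] \leq r(0) + \sum_{t=0}^k (t+2)\Eset[E(t)] + \tfrac{3}{n}\sum_{t=0}^k(t+2)\beta(t)\Eset[\|\Ybf(t)\|^2].
\]

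\textbf{Main obstacle and closing the loop.} The crux is controlling $\sum_{t=0}^k(t+2)\beta(t)\Eset[\|\Ybf(t)\|^2] = b\sum_t (t+2)^{1/3}\Eset[\|\Ybf(t)\|^2]$; the naive bound from the consensus paragraph only yields $\mathcal{O}(k)$, which is insufficient. I will establish the pointwise decay $\Eset[\|\Ybf(t)\|^2] = \mathcal{O}(1/(t+2)^{2/3})$ by induction on the one-step recursion \eqref{lem_consensus:UpperBound}: the condition $b(1-\sigma_2)\geq 1$ makes the contraction factor $(1-(1-\sigma_2)\beta(t))$ match the natural decay rate of $1/(t+2)^{2/3}\to 1/(t+3)^{2/3}$, while the noise $\mathcal{O}(1/(t+2)^{4/3})$ has precisely the order needed for the induction to close, producing an induction constant that scales like $\mathcal{O}(1/(1-\sigma_2)^2)$. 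This pointwise bound yields $\sum_t (t+2)^{1/3}\Eset[\|\Ybf(t)\|^2] = \mathcal{O}((k+2)^{2/3})$ and hence $\Eset[r(k+1)] = \mathcal{O}(1/(k+2)^{1/3}) + \mathcal{O}(\ln(k+2)/(k+2))$; summing this pointwise rate gives $\sum_{t=0}^k \Eset[r(t)] = \mathcal{O}((k+2)^{2/3})$. Inserting both cumulative bounds into the averaging inequality in the first paragraph then produces \eqref{thm_rate_conv_sc:Ineq}, with the $1/(1-\sigma_2)^3$ factor arising from the induction constant ($\propto 1/(1-\sigma_2)^2$) multiplied by the $b$-calibration factor $1/(1-\sigma_2)$. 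I expect the delicate calibration of the induction on $\|\Ybf(t)\|^2$ to be the main technical hurdle.
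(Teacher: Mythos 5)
Your proposal is sound and yields the stated $\mathcal{O}\left(1/(k+2)^{1/3}\right)$ rate, but it reaches the conclusion by a genuinely different route in two places. The paper carries the quantity $f(\xbf_\ell(t))-f^*$ through the $(k+2)$-weighted telescoping and only at the very end converts to a distance bound via Jensen's inequality applied to $f$ together with the quadratic-growth inequality $\tfrac{\mu}{2}\|\zbf_\ell(k)-\xbf^*\|^2\le f(\zbf_\ell(k))-f^*$ (this is where the condition $a\ge 1/\mu$ is cashed in a second time, to absorb the $1/(\mu\alpha(0))$ factor). You instead apply Jensen directly to $\|\cdot\|^2$ on the Ces\`aro average and reduce everything to the two cumulative sums $\sum_t\Eset[r(t)]$ and $\sum_t\Eset[\|\Ybf(t)\|^2]$; this requires one extra step the paper does not need (first extracting the pointwise decay $\Eset[r(t)]=\mathcal{O}((t+2)^{-1/3})$ from the telescoped recursion and then re-summing it), but it avoids invoking strong convexity of $f$ at the averaged point. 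The second difference is in the crux term $\sum_t(t+2)\beta(t)\Eset[\|\Ybf(t)\|^2]$: the paper handles it by an Abel-summation rearrangement of the one-step recursion (reducing it to $\tfrac{1}{1-\sigma_2}\sum_t\Eset[\|\Ybf(t)\|^2]$ plus summable terms) combined with a weaker pointwise bound $\Eset[\|\Ybf(t)\|^2]=\mathcal{O}((t+2)^{-1/3})$, whereas you propose a single sharper induction $\Eset[\|\Ybf(t)\|^2]=\mathcal{O}((t+2)^{-2/3})$. Your induction does close: with $b(1-\sigma_2)\ge 1$ the contraction $(1-\sigma_2)\beta(t)\ge (t+2)^{-2/3}$ dominates the drift $\tfrac{2}{3}(t+2)^{-5/3}$ of the target envelope, and the noise $\mathcal{O}((t+2)^{-4/3})$ is exactly of order $\beta(t)$ times the envelope, so a constant $D\gtrsim C/(b(1-\sigma_2))$ works uniformly in $t$; this is arguably cleaner than the paper's two-stage argument. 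The only caveat is that your chain of estimates (the factor $2$ from $\|\xbf_i-\xbf^*\|^2\le 2\|\Ybf\|^2+2r$, and the extra re-summation of the pointwise bound on $\Eset[r(t)]$) will produce numerical constants different from those displayed in \eqref{thm_rate_conv_sc:Ineq}, though with the same dependence on $n$, $L$, $\Delta$, and $1-\sigma_2$; since the paper's own constants are themselves loose, this does not constitute a substantive gap.
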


\begin{proof}[Proof Sketch]
The first step in this analysis is using the strong convexity of the functions $f_i$ into Eq.\ \eqref{lem_opt_dist:Ineq} to have
\begin{align*}
\Eset[r(k+1)\,|\,\Fcal_k] &\leq (1-\mu\alpha(k))r(k) + \mathcal{O}\left(\alpha^2(k) + \beta^2(k) + \frac{\alpha^2(k)}{\beta(k)}\right)\\    
&\qquad + \frac{3}{n} \beta(k)\|\Ybf(k)\|^2 -\frac{2\alpha(k)}{n}\Big(f(\xbarbf(k))-f^*\Big).
\end{align*} 
The rest of this proof is similar to the one in Theorem \ref{thm:rate_conv}.
\end{proof}


\section{Simulations}\label{sec:sim}
In this section, we apply Algorithm \ref{alg:QDSG} for solving linear regression problems, the most popular technique for data fitting \cite{HTF2009,SSBD2014} in statistical machine learning, over a network of processors under random quantization. The goal of this problem is to find a linear relationship between a set of variables and some real value outcome. That is, given a training set $S = \{(\abf_i,b_i)\in\mathbb{R}^{d}\times\mathbb{R}\}$ for $i=1,\ldots,n$, we want to learn a parameter $\xbf$ that minimizes 
\begin{align*}
\min_{\xbf\in\Xcal}\sum_{i=1}^n f_i(\xbf;\abf_i,b_i),
\end{align*}
where $\Xcal = [-1\,,\, 1]^{d}$ and $d=10$, i.e., $\xbf,\, \abf_i\in\Rset^{10}$. Here, $f_i$ are the loss functions defined over the dataset. For the purpose of our simulation, we will consider two loss functions, namely, quadratic loss and absolute loss functions. While the quadratic loss is strongly convex, the absolute loss is only convex.         

First, when $f_i$ are quadratic, we have the well-known least square problem given as
\begin{align*}
\min_{\xbf\in\Xcal}\;\sum_{i=1}^n(\abf_i^T\xbf-b_i)^2.
\end{align*}
Second, regression problems with absolute loss functions (or L$1$ norm) is often referred to as robust regression, which is known to be robust to outliers \cite{karst1958linear}, given as follows
\begin{align*}
\min_{\xbf\in\Xcal}\;\sum_{i=1}^n |\, \abf_i^T\xbf-b_i\,|.
\end{align*} 

We consider simulated training data sets, i.e., $(\abf_i,b_i)$ are generated randomly with uniform distribution between $[0,1]$.  We consider the performance of the distributed subgradient methods on an undirected connected graph of $50$ nodes, i.e., $\Gcal=(\Vcal,\Ecal)$ and n = $|\Vcal|=50$. Our graph is generated as follows. 
\begin{enumerate}
\item In each network, we first randomly generate the nodes' coordinates in the plane with uniform distribution.
\item Then any two nodes are connected if their distance is less than a reference number $r$, e.g, $r = 0.4$ for our simulations.
\item Finally we check whether the network is connected. If not we return to step $1$ and run the program again.
\end{enumerate}
To implement our algorithm, the adjacency matrix $\Abf$ is chosen as a lazy Metropolis matrix corresponding to $\Gcal$, i.e.,
\begin{align}
\Abf = [a_{ij}] = \left\{\begin{array}{ll}
\frac{1}{2(\max\{|\mathcal{N}_i| , |\Ncal_j|\})}, & \text{ if } (i,j) \in \Ecal\\
0, &\text{ if } (i,j)\notin\Ecal \text{ and } i\neq j\\
1-\sum_{j\in\Ncal_i}a_{ij},& \text{ if } i = j
\end{array}\right.\nonumber
\end{align}
It is straightforward to verify that the lazy Metropolis matrix $\Abf$ satisfies Assumption \ref{assump:doub_stoch}.

\subsection{Convergence of Function Values}

 \begin{figure} 
   \begin{subfigure}[b]{0.5\textwidth}
        \centering
    \includegraphics[width=\textwidth]{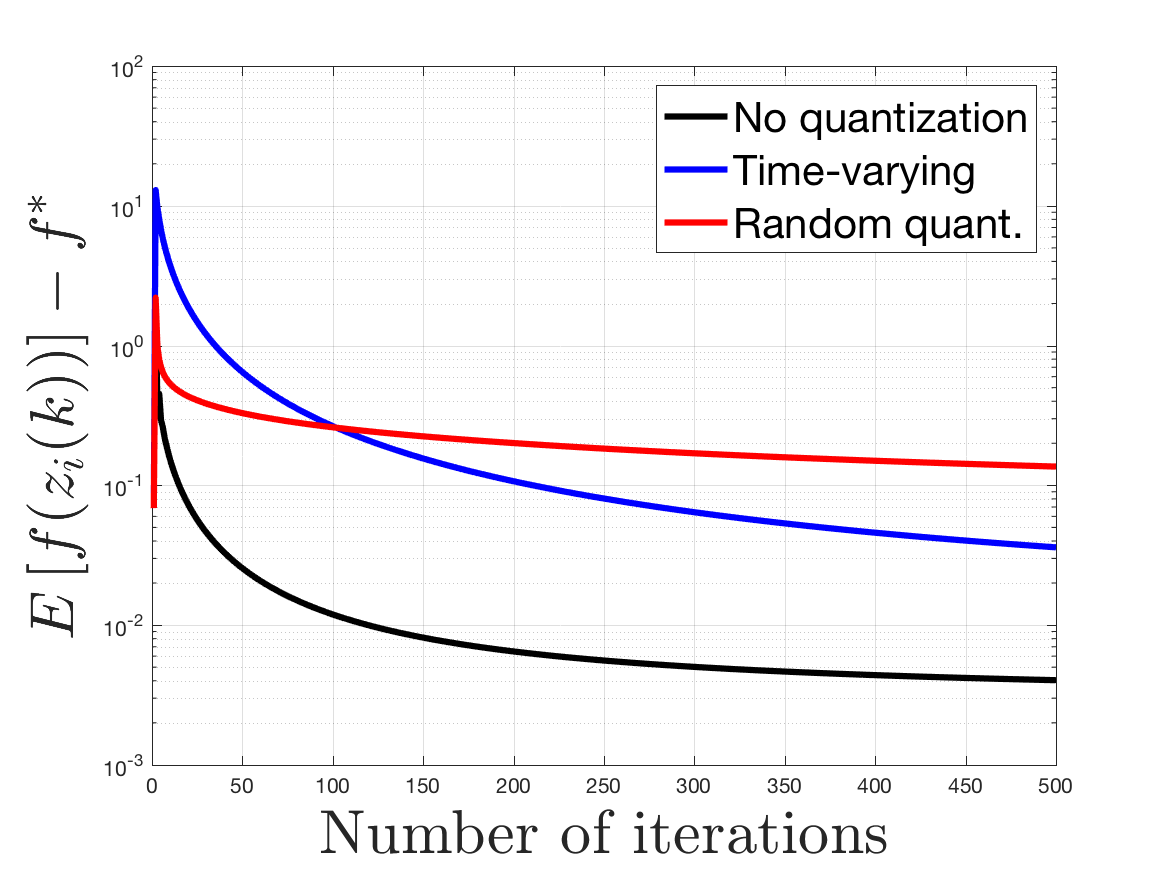}
    \caption{Quadratic loss functions}
     \label{fig:sc_func_value}
    \end{subfigure} 
   \begin{subfigure}[b]{0.5\textwidth}
        \centering \vspace{0.2cm}
    \includegraphics[width=\textwidth]{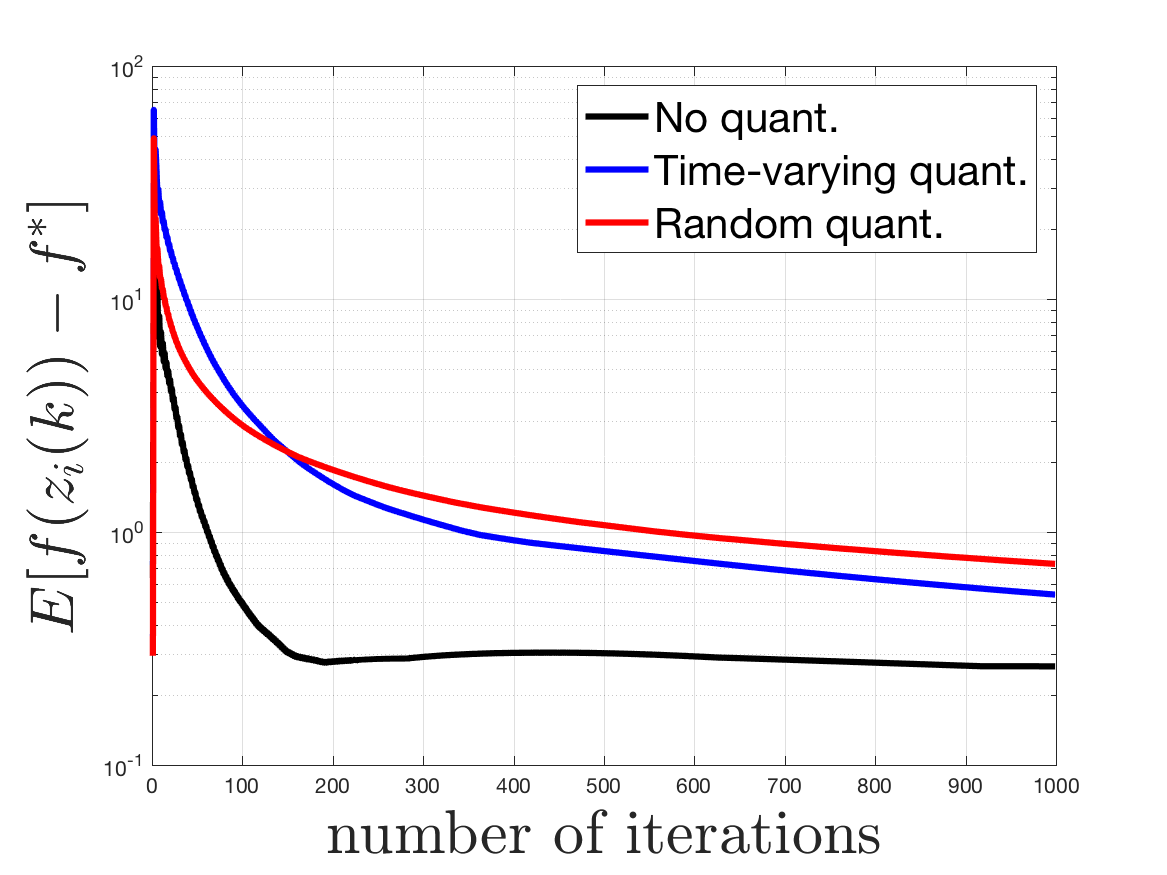}
        \caption{Absolute loss functions}
         \label{fig:c_func_value}
    \end{subfigure} 
 \caption{The convergence of function values using distributed subgradient methods without (\protect\blackline), with time-varying  (\protect\blueline), and with random  (\protect\redline) quantization for $n=50$ and $d=10$ are illustrated.}
 \label{fig:func_value}
 \end{figure}
In this simulation, we apply variants of distributed subgradient methods for solving the linear regression problems. In particular, we compare the performance of such methods for three different scenarios, namely, {\sf DSG} with no quantization (a.k.a Eq.\ \eqref{distributed:DSGD}), {\sf DSG} with time-varying quantization in \cite{DoanMR2018}, and the proposed stochastic variant of Eq.\ \eqref{distributed:DSGD} in Algorithm \ref{alg:QDSG}. The plots in Fig. \ref{fig:func_value} show the convergence of these three methods for both quadratic and absolute loss functions.  
 
Note that, to achieve an asymptotic convergence the work  in \cite{DoanMR2018} requires that the nodes eventually exchange an infinite number of bits. On the other hand, Algorithm \ref{alg:QDSG} in this paper assumes the nodes use a finite number of constant bits $b$ in their communication. However, as observed both in Fig. \ref{fig:sc_func_value} for quadratic loss and in Fig. \ref{fig:c_func_value} for absolute loss, Algorithm \ref{alg:QDSG} performs almost as well as the one in \cite{DoanMR2018}.

\subsection{Impacts of the Number of Bits $b$}
Here, we consider the impacts of the number of communication bits $b$ on the performance of Algorithm \ref{alg:QDSG}. In Fig. \ref{fig:b_impact} we plots the number of iterations, needed for the relative error $f(z_i(k))-f^*\,/\,f^*\leq 0.2$, as a function of $b$. As we can see the more bits we use the faster the algorithm converges. Moreover, the number of iterations required by the algorithm seems to be the same when $b$ is larger than $11$. This does make sense due to the numerical rounding of the computer program. Finally, the curves in Fig. \ref{fig:b_impact} seems to reflect the dependence of the rate of Algorithm \ref{alg:QDSG} on the variance $\Delta^2 = C/(2^b-1)$ within some constant factor $C$, which agrees with our results in Theorems \ref{thm:rate_conv} and \ref{thm_rate_conv_sc}.

 \begin{figure} 
   \begin{subfigure}[b]{0.5\textwidth}
        \centering
    \includegraphics[width=\textwidth]{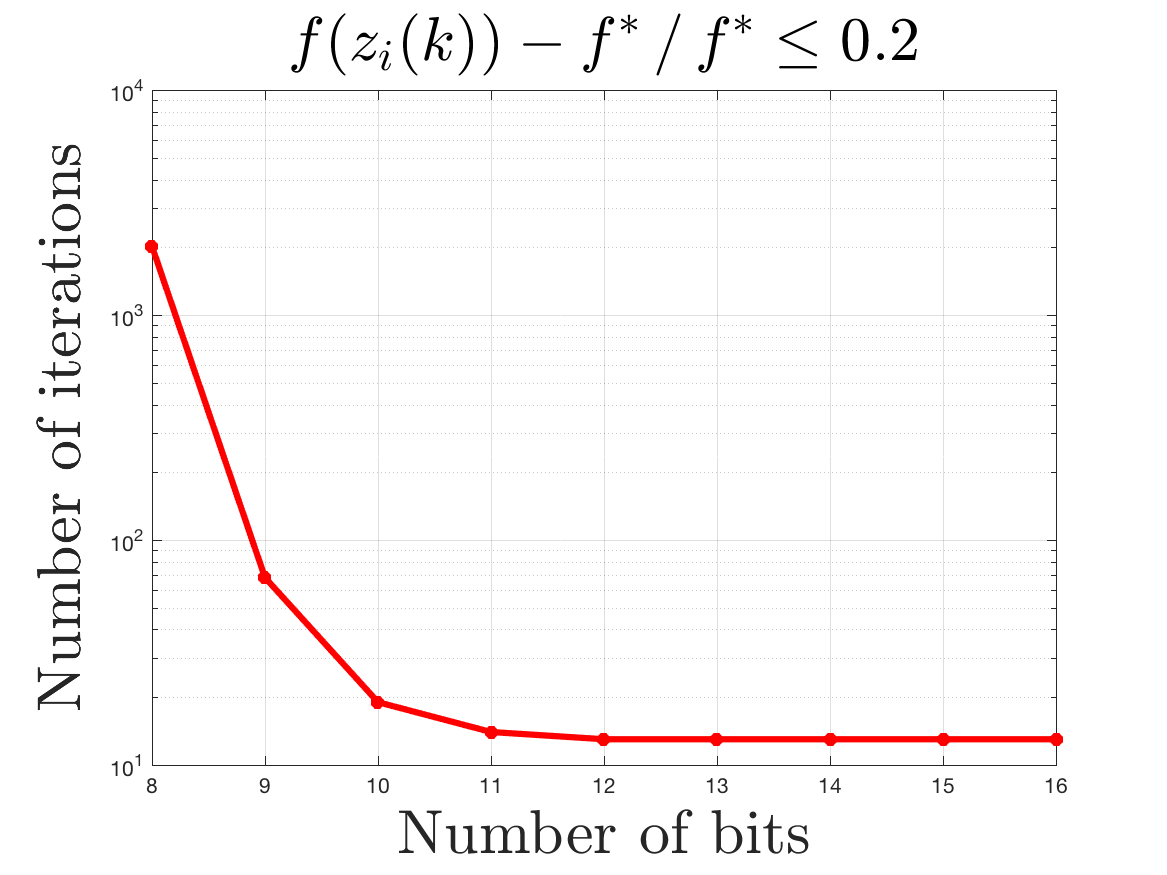}
    \caption{Quadratic loss functions}
    \end{subfigure} 
   \begin{subfigure}[b]{0.5\textwidth}
        \centering    \vspace{0.2cm}
    \includegraphics[width=\textwidth]{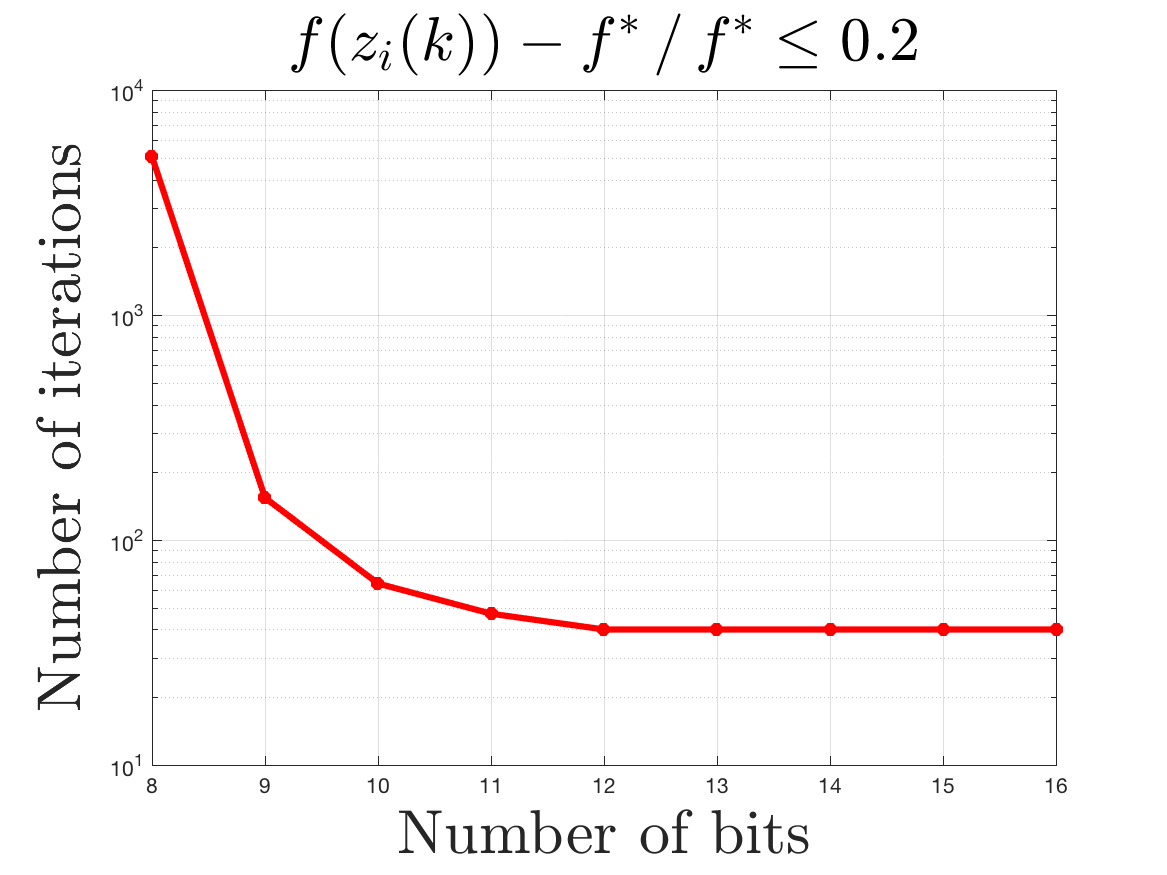}
        \caption{Absolute loss functions}
    \end{subfigure} 
 \caption{The number of iterations as a function of $b$ using distributed subgradient methods with random  quantization for $n=50$ and $d=10$ are illustrated.}
 \label{fig:b_impact}
 \end{figure}


\section{Proofs of Main Results}\label{sec:proof_main_results}
In this section, we present the proofs of our main results given in Section \ref{sec:analysis}. 
\subsection{Proof of Theorem \ref{thm:asymp_conv}}
By the convexity of $f_i$ we have
\begin{align*}
&- \frac{2\alpha(k)}{n}\sum_{i=1}^n \gbf_i^T(x_i(k))\Big(\xbf_i(k)-\xbf^*\Big) \notag\\
& \leq \frac{-2\alpha(k)}{n}\sum_{i=1}^n f_i(\xbf_i(k)) - f_i(\xbf^*)\notag\\
&= \frac{-2\alpha(k)}{n}\sum_{i=1}^nf_i(\xbf_i(k)) -f_i(\xbarbf(k))+f_i(\xbarbf(k)) - f_i(\xbf^*),
\end{align*}
which by the $L_i$-Lipschitz continuity of $f_i$ in Proposition \ref{prop:bounded_subg} yields 
\begin{align*}
&- \frac{2\alpha(k)}{n}\sum_{i=1}^n \gbf_i^T(x_i(k))\Big(\xbf_i(k)-\xbf^*\Big) \notag\\
&\leq \frac{2}{n}\sum_{i=1}^n\alpha(k)L_i\,\|\ybf_i(k)\,\|-\frac{2\alpha(k)}{n}\sum_{i=1}^nf_i(\xbarbf(k)) - f_i(\xbf^*)\notag\\
&\leq \frac{L^2\alpha^2(k)}{n\beta(k)} + \frac{\beta(k)\|\Ybf(k)\|^2}{n} -\frac{2\alpha(k)(f(\xbar(k))  - f^*)}{n}\cdot
\end{align*}
Substituting the preceding relation into Eq.\ \eqref{lem_opt_dist:Ineq} in Lemma \ref{lem:opt_dist} gives 
\begin{align}
&\Eset\left[\,r(k+1)\,|\,\Fcal_k\right]\notag\\ 
&\leq  r(k) + \frac{6L^2\alpha^2(k)}{n(1-\beta(0))}+\frac{2L^2\alpha^2(k)}{n\beta(k)} + \Delta^2\beta^2(k)\notag\\
&\qquad + \frac{2\beta(k)\|\Ybf(k)\|^2}{n}+ \frac{L^2\alpha^2(k)}{n\beta(k)} + \frac{\beta(k)\|\Ybf(k)\|^2}{n}\notag\\
&\qquad -\frac{2\alpha(k)(f(\xbar(k))  - f^*)}{n}\allowdisplaybreaks\notag\\
&= r(k) + \frac{6L^2\alpha^2(k)}{n(1-\beta(0))}+\frac{3L^2\alpha^2(k)}{n\beta(k)} + \Delta^2\beta^2(k)\notag\\
&\qquad +\frac{2L^2\alpha^2(k)}{n(1-\beta(k))} + \frac{3\beta(k)\|\Ybf(k)\|^2}{n}\notag\\
&\qquad -\frac{2\alpha(k)}{n}(f(\xbar(k))  - f^*).\label{thm_opt_conv:Eq1}
\end{align}
Since $\{\alpha(k),\beta(k)\}$ satisfy Eq.\ \eqref{lem_consensus:stepsizes}, Eq.\ \eqref{lem_consensus:FiniteSum} also holds, which implies that
{\small
\begin{align*}
\sum_{k=0}^{\infty}\left(\alpha^2(k) + \beta^2(k) + \frac{\alpha^2(k)}{\beta(k)} + \beta(k)\|\Ybf(k)\|^2 \right) < \infty. 
\end{align*}}
Thus, we can apply Lemma \ref{lem:Martingale} to Eq.\ \eqref{thm_opt_conv:Eq1} to have
\begin{align}
\begin{aligned}
&\lim_{k\rightarrow\infty}r(k)\qquad \text{ exits a.s. for each } \xbf^*\\ &\sum_{k=0}^\infty\alpha(k)\left(f(\xbarbf(k))  - f^*\right) < \infty\qquad \text{a.s.}
\end{aligned}\label{thm_opt_conv:Eq2}
\end{align}
Consequently, since $\sum_{k=0}^\infty\alpha(k)=\infty$, Eq.\ \eqref{thm_opt_conv:Eq2}  implies
\begin{align*}
\liminf_{k\rightarrow\infty} f(\xbarbf(k)) = f^* \text{ a.s.}
\end{align*}
Let $\{\xbarbf(k_{\ell})\}$ be a subsequence of $\{\xbarbf(k)\}$ such that 
\begin{align*}
\lim_{\ell\rightarrow\infty} f(\xbarbf(k_{\ell})) = \liminf_{k\rightarrow\infty} f(\xbarbf(k)) = f^* \text{ a.s.}
\end{align*}
Since $\{\|\,\xbarbf(k)-x^*\,\|\}$ converges, the subsequence $\{\xbarbf(k_{\ell})\}$ is bounded. Hence, there is a convergent subsequence of $\{\xbarbf(k_{\ell})\}$, which converges to some minimizer $\widetilde{\xbf}$ of problem \eqref{prob:obj} a.s. since $\lim_{\ell\rightarrow\infty} f(\xbarbf(k_{\ell})) = f^*$ a.s. In addition, since $\Big\{\|\,\xbarbf(k)-\xbf^*\,\|\Big\} \text{ converges a.s. for each } x^*$, and in particular, $\Big\{\|\,\xbarbf(k)-\widetilde{\xbf}\,\|\Big\} \text{ converges a.s.}$,
we obtain
\begin{align*}
\lim_{k\rightarrow\infty}\xbarbf(k) = \widetilde{\xbf}\text{ a.s. },
\end{align*}
which together with Eq.\ \eqref{lem_consensus:AsympCov} implies Eq.\ \eqref{thm_opt:asympconv}.

\subsection{Proof of Theorem \ref{thm:rate_conv}}
Taking the expectation on both sides of Eq.\ \eqref{thm_opt_conv:Eq1} yields 
\begin{align}
& \Eset\left[\,r(k+1)\,\right]\notag\\ 
&\leq \Eset\left[\,r(k)\,\right]+ \frac{6L^2\alpha^2(k)}{n(1-\beta(0))}+\frac{3L^2\alpha^2(k)}{n\beta(k)} + \Delta^2\beta^2(k)\notag\\
&\qquad+\frac{3\beta(k)\Eset[\,\|\Ybf(k)\|^2\,]}{n} - \frac{2\alpha(k)}{n}\Eset[\,f(\xbarbf(k))  - f^*\,].\label{thm_rate_conv:Eq1}
\end{align}
Fix some $\ell\in\Vcal$ and consider
\begin{align*}
&-\frac{2\alpha(k)}{n}\Big(f(\xbarbf(k))  - f^*\Big)\notag\\ 
& = -\frac{2\alpha(k)}{n}\Big(f(\xbf_\ell(k))- f^*\Big)\nonumber\\
&\qquad -\frac{2\alpha(k)}{n}\sum_{i=1}^nf_i(\xbarbf(k))-f_i(\xbf_\ell(k)),
\end{align*}
which by the $L_i$-Lipschitz continuity of $f_i$ yields
\begin{align*}
&-\frac{2\alpha(k)}{n}\Big(f(\xbarbf(k))  - f^*\Big)\notag\\ 
&\leq -\frac{2\alpha(k)}{n}\Big(f(\xbf_\ell(k))- f^*\Big)\nonumber\\
&\qquad + \frac{2}{n}\sum_{i=1}^n\alpha(k)L_i\,\|\,\xbarbf(k)-\xbf_\ell(k)\,\|\notag\\
&\leq -\frac{2\alpha(k)}{n}\Big(f(\xbf_\ell(k))- f^*\Big)\nonumber\\
&\qquad + \frac{1}{n}\sum_{i=1}^n\left(\frac{L_i^2\alpha^2(k)}{\beta(k)}+\beta(k)\|\xbarbf(k)-\xbf_\ell(k)\|^2\right)\notag\\
&\leq  -\frac{2\alpha(k)}{n}\Big(f(\xbf_\ell(k))- f^*\Big)+ \frac{L^2\alpha^2(k)}{n\beta(k)} + \beta(k)\|\Ybf(k)\|^2,
\end{align*}
where the second inequality is due to the Cauchy-Schwarz inequality. Substituting the preceding relation into Eq.\ \eqref{thm_opt_conv:Eq1} gives 
\begin{align*}
& \Eset\left[\,r(k+1)\,\right]\notag\\ 
&\leq  \Eset\left[\,r(k)\,\right] + \frac{6L^2\alpha^2(k)}{n(1-\beta(0))}+\frac{4L^2\alpha^2(k)}{n\beta(k)} + \Delta^2\beta^2(k)\notag\\
&\qquad +4\beta(k)\,]\Eset[\,\|\Ybf(k)\|^2- \frac{2\alpha(k)}{n}\Eset[\,f(\xbf_{\ell}(k))  - f^*\,].
\end{align*}
Summing up both sides of the preceding relation over $k = 0,\ldots,K$ for some $K\geq 0$ and rearranging we obtain
\begin{align*}
&\sum_{k=0}^{K}\alpha(k)\Eset[\,f(x_\ell(k))- f^*\,]\notag\\
&\leq \frac{n\Eset\left[\,r(0)\,\right]}{2} + \frac{3L^2}{1-\beta(0)}\sum_{k=0}^{K}\alpha^2(k) + 2L^2\sum_{k=0}^{K}\frac{\alpha^2(k)}{\beta(k)}\notag\\
&\qquad + \frac{n\Delta^2}{2}\sum_{k=0}^{K}\beta^2(k)+ 2n\sum_{k=0}^{K}\beta(k)\Eset[\,\|\Ybf(k)\|^2\,],
\end{align*}
which by applying Eq. \eqref{lem_consensus:UpperBound_rate} to upper bound the last term on the right-hand side gives
\begin{align}
&\sum_{k=0}^{K}\alpha(k)\Eset[\,f(x_\ell(k))- f^*\,]\notag\\
&\leq \frac{n\Eset\left[\,r(0)\,\right]}{2} + \frac{3L^2}{1-\beta(0)}\sum_{k=0}^{K}\alpha^2(k) + 2L^2\sum_{k=0}^{K}\frac{\alpha^2(k)}{\beta(k)}\notag\\
&\qquad + \frac{n\Delta^2}{2}\sum_{k=0}^{K}\beta^2(k)+\frac{2n\Eset[\,\|\Ybf(0)\|^2\,]}{1-\sigma_2}\notag\\
&\qquad +\frac{2n^2\sigma_2^2\Delta^2}{1-\sigma_2} \sum_{k=0}^{K}\beta^2(k) + \frac{16nL^2}{(1-\sigma_2)^2}\sum_{k=0}^{K}\frac{\alpha^2(k)}{\beta(k)}\allowdisplaybreaks\notag\\
&\leq \frac{n\Eset\left[\,r(0)\,\right]}{2}  +\frac{2n\Eset[\,\|\Ybf(0)\|^2\,]}{1-\sigma_2}+ \frac{3L^2}{1-\beta(0)}\sum_{k=0}^{K}\alpha^2(k) \notag\\
&\qquad + \frac{18nL^2}{(1-\sigma_2)^2}\sum_{k=0}^{K}\frac{\alpha^2(k)}{\beta(k)} +\frac{5n^2\Delta^2}{2(1-\sigma_2)} \sum_{k=0}^{K}\beta^2(k)\cdot \label{thm_rate_conv:Eq2}
\end{align}
Recall from Eq.\ \eqref{thm_rate_conv:stepsizes} that
\begin{align*}
\alpha(k) = \frac{1}{(k+2)^{3/4}},\qquad \beta(k) = \frac{1}{(k+2)^{1/2}},
\end{align*}
implying that $1\,/\,(1-\beta(0))\leq 4$. In addition, using the integral test we have
\begin{align*}
& \sum_{k=0}^{K}\alpha(k) = \sum_{k=0}^{K}\frac{1}{(k+2)^{3/4}}\geq  \int_{0}^{K}\frac{ds}{(s+2)^{3/4}}\geq 4(K+2)^{1/4}\\
&\sum_{k=0}^{K}\alpha^{2}(k) \leq 1 + \int_{0}^{K}\frac{ds}{(s+2)^{3/2}}\leq 5\\
&\sum_{k=0}^{K}\beta^{2}(k) \leq 1 + \int_{0}^{K}\frac{ds}{s+2}\leq 1 + \ln(K+2)\\
&\sum_{k=0}^{K}\frac{\alpha^2(k)}{\beta(k)}\leq 1+\int_{0}^{K}\frac{ds}{s+2}\leq 1 + \ln(K+2)\cdot 
\end{align*}
Thus, dividing both sides of Eq.\ \eqref{thm_rate_conv:Eq2} by $\sum_{k=0}^{K}\alpha(k)$ and by the Jensen's inequality yields  Eq.\ \eqref{thm_rate_conv:Ineq} , i.e.,
\begin{align*}
&\Eset\left[f\left(\frac{\sum_{k=0}^{K}\alpha(k)x_{\ell}(k)}{\sum_{k=0}^{K}\alpha(k)}\right)\right] - f^*\notag\\
&\leq \frac{\sum_{k=0}^{K}\alpha(k)\Eset[\,f(x_\ell(k))\,]}{\sum_{k=0}^{K}\alpha(k)}- f^*\notag\\
&\leq \frac{n\Eset\left[\,r(0)\,\right]}{8(K+2)^{1/4}}  +\frac{n\Eset[\,\|\Ybf(0)\|^2\,]}{2(1-\sigma_2)(K+2)^{1/4}}+ \frac{16L^2}{(K+2)^{1/4}}\notag\\
&\qquad +\frac{9nL^2(1+\ln(K+2))}{2(1-\sigma)^2(K+2)^{1/4}}+\frac{5n^2\Delta^2(1+\ln(K+2))}{8(K+2)^{1/4}}\cdot
\end{align*}

\subsection{Proof of Theorem \ref{thm_rate_conv_sc}}
Fix some $\ell\in\Vcal$. First, the strong convexity of $f_i$ gives 
\begin{align*}
&-\frac{2\alpha(k)}{n}\sum_{i=1}^n \gbf_i(\xbf_i(k))(\xbf_i(k)-x^*) \notag\\
& \leq \frac{-2\alpha(k)}{n}\sum_{i=1}^n \Big(f_i(\xbf_i(k)) - f_i(\xbf^*)+\frac{\mu_i}{2}\|\xbf_i(k)-x^*\|^2\Big)\notag\\
&= \frac{-2\alpha(k)}{n}\sum_{i=1}^nf_i(\xbf_i(k)) -f_i(\xbarbf(k))+f_i(\xbarbf(k)) - f_i(\xbf^*)\notag\\
&\qquad -\frac{\alpha(k)}{n}\sum_{i=1}^n \mu_i\|\xbf_i(k)-\xbf^*\|^2,
\end{align*}
which by the $L_i$-Lipschitz continuity of $f_i$ yields
\begin{align}
&-\frac{2\alpha(k)}{n}\sum_{i=1}^n \gbf_i(\xbf_i(k))(\xbf_i(k)-x^*) \notag\\
&\leq \frac{2}{n}\sum_{i=1}^n\alpha(k)L_i\,\|\ybf_i(k)\,\|-\frac{2\alpha(k)}{n}\sum_{i=1}^n f_i(\xbarbf(k)) - f_i(\xbf^*)\notag\\
&\qquad -\frac{\mu\alpha(k)}{n}\sum_{i=1}^n\|\xbf_i(k)-\xbf^*\|^2\notag\\
&\leq \frac{L^2\alpha^2(k)}{n\beta(k)} + \frac{\beta(k)\|\Ybf(k)\|^2}{n} -\mu\alpha(k)r(k)\notag\\
&\qquad -\frac{2\alpha(k)}{n}\sum_{i=1}^n f_i(\xbarbf(k)) -f_i(\xbf_{\ell}(k)) +f_i(\xbf_{\ell}(k))- f_i(\xbf^*)\allowdisplaybreaks\notag\\
&\leq \frac{2L^2\alpha^2(k)}{n\beta(k)} + 2\beta(k)\|\Ybf(k)\|^2 -\mu\alpha(k)r(k)\notag\\
&\qquad -\frac{2\alpha(k)}{n}\Big(f(\xbf_{\ell}(k)) - f(\xbf^*)\Big),\label{thm_rate_conv_sc:Eq1}
\end{align}
where in the second inequality we use the Cauchy-Schwartz inequality to have
\begin{align*}
-\sum_{i=1}^n\frac{\|\xbf_i(k)-\xbf^*\|^2}{n} \leq -\left\|\sum_{i=1}^n\frac{1}{n}(\xbf_i(k)-\xbf^*)\right\|^2 = -r(k)      
\end{align*}
Thus, substituting Eq.\ \eqref{thm_rate_conv_sc:Eq1} into Eq.\ \eqref{lem_opt_dist:Ineq} we obtain
\begin{align}
& \Eset\left[\,r(k+1)\,|\,\Fcal_k\,\right]\notag\\ 
&\leq r(k) + \frac{6L^2\alpha^2(k)}{n(1-\beta(0))}+\frac{2L^2\alpha^2(k)}{n\beta(k)} + \Delta^2\beta^2(k)\notag\\
&\qquad  + \frac{2\beta(k)\|\Ybf(k)\|^2}{n}+ \frac{2L^2\alpha^2(k)}{n\beta(k)} + 2\beta(k)\|\Ybf(k)\|^2\notag\\
&\qquad  -\mu\alpha(k)r(k)-\frac{2\alpha(k)}{n}\Big(f(\xbf_{\ell}(k)) - f(\xbf^*)\Big)\notag\\
&= (1-\mu\alpha(k))r(k) + \frac{6L^2\alpha^2(k)}{n(1-\beta(0))}+\frac{4L^2\alpha^2(k)}{n\beta(k)} + \Delta^2\beta^2(k)\notag\\
&\qquad + 4\beta(k)\|\Ybf(k)\|^2 -\frac{2\alpha(k)}{n}\Big(f(\xbf_{\ell}(k)) - f(\xbf^*)\Big).\label{thm_rate_conv_sc:Eq2}
\end{align}
Note that since $\alpha(k)$ satisfies Eq.\ \eqref{thm_rate_conv_sc:stepsizes}, we have $$1-\mu\alpha(k)\leq 1-\frac{1}{k+2}\cdot$$ Then, using Eq.\ \eqref{thm_rate_conv_sc:stepsizes} into Eq. \eqref{thm_rate_conv_sc:Eq2} gives
\begin{align*}
& \Eset\left[\,r(k+1)\,|\,\Fcal_k\,\right]\notag\\ 
&\leq \left(1-\frac{1}{k+2}\right)r(k) + \frac{6L^2\alpha^2(k)}{n(1-\beta(0))}+\frac{4L^2\alpha^2(k)}{n\beta(k)} + \Delta^2\beta^2(k)\notag\\
&\qquad + 4\beta(k)\|\Ybf(k)\|^2-\frac{2\alpha(k)}{n}\Big(f(\xbf_{\ell}(k)) - f(\xbf^*)\Big)\notag\\
&\leq \frac{k+1}{k+2}r(k)  + \frac{6L^2\alpha^2(k)}{n(1-\beta(0))}+\frac{4L^2\alpha^2(k)}{n\beta(k)} + \Delta^2\beta^2(k)\notag\\
&\qquad + 4\beta(k)\|\Ybf(k)\|^2 -\frac{2\alpha(k)}{n}\Big(f(\xbf_{\ell}(k)) - f(\xbf^*)\Big)\allowdisplaybreaks\nonumber\\
&\leq \frac{k+1}{k+2}r(k) + \frac{6L^2\alpha^2(0)}{n(1-\beta(0))(k+2)^2} +  \frac{4L^2\alpha^2(0)}{n\beta(0)(k+2)^{4/3}}\notag\\
&\qquad +\frac{\Delta^2\beta^2(0)}{(k+2)^{4/3}} + 4\beta(k)\|\Ybf(k)\|^2 \notag\\ 
&\qquad -\frac{2\alpha(0)}{n(k+2)}\Big(f(\xbf_{\ell}(k)) - f(\xbf^*)\Big),
\end{align*}
which when multiplying both sides by $(k+2)$ yields 
\begin{align*}
& (k+2)\Eset\left[\,r(k+1)\,|\,\Fcal_k\,\right]\notag\\
&\leq (k+1)r(k) + \frac{6L^2\alpha^2(0)}{n(1-\beta(0))(k+2)} +  \frac{4L^2\alpha^2(0)}{n\beta(0)(k+2)^{1/3}}\notag\\
&\qquad +\frac{\Delta^2\beta^2(0)}{(k+2)^{1/3}} + 4\beta(k)\|\Ybf(k)\|^2 \notag\\ 
&\qquad -\frac{2\alpha(0)}{n}\Big(f(\xbf_{\ell}(k)) - f(\xbf^*)\Big).
\end{align*}
By iteratively updating over $k$ of the preceding relation we have 
\begin{align*}
& (k+2)\Eset\left[\,r(k+1)\,|\,\Fcal_k\,\right]\notag\\ 
&\leq r(0) +  \sum_{t=0}^{k}\frac{6L^2\alpha^2(0)}{n(1-\beta(0))(t+2)^2}(t+2)\notag\\ 
&\qquad +  \sum_{t=0}^{k}\left(\frac{\Delta^2\beta^2(0)}{(t+2)^{4/3}} +  \frac{4L^2\alpha^2(0)}{n\beta(0)(t+2)^{4/3}}\right)(t+2)\notag\\
&\qquad + 4\sum_{t=0}^{k}(t+2)\beta(t)\|\Ybf(t)\|^2 - \sum_{t=0}^{k}\frac{2\alpha(0)}{n}\Big(f(\xbf_{\ell}(t)) - f(\xbf^*)\Big)\notag\\
&\leq r(0)+  \frac{6L^2\alpha^2(0)}{n(1-\beta(0))}\sum_{t=0}^{k}\frac{1}{(t+2)}\notag\\
&\qquad +  \sum_{t=0}^{k}\left(\frac{\Delta^2\beta^2(0)}{(t+2)^{1/3}} +  \frac{4L^2\alpha^2(0)}{n\beta(0)(t+2)^{1/3}}\right)\notag\\
&\qquad + 4\sum_{t=0}^{k}(t+2)\beta(t)\|\Ybf(t)\|^2 - \frac{2\alpha(0)}{n}\sum_{t=0}^{k}\Big(f(\xbf_{\ell}(t)) - f(\xbf^*)\Big).
\end{align*}
Taking the expectation of the preceding inequality and rearranging gives 
\begin{align*}
& \frac{2\alpha(0)}{n}\sum_{t=0}^{k}\Eset\Big[f(\xbf_{\ell}(t)) - f(\xbf^*)\Big] +  (k+2)\Eset\left[\,r(k+1)\,\right]\notag\\ 
&\leq \Eset[\,r(0)\,]+  \frac{6L^2\alpha^2(0)}{n(1-\beta(0))}\sum_{t=0}^{k}\frac{1}{(t+2)}\notag\\
&\qquad +  \sum_{t=0}^{k}\left(\frac{\Delta^2\beta^2(0)}{(t+2)^{1/3}} +  \frac{4L^2\alpha^2(0)}{n\beta(0)(t+2)^{1/3}}\right)\notag\\
&\qquad + 4\sum_{t=0}^{k}(t+2)\beta(t)\Eset[\,\|\Ybf(t)\|^2\,],
\end{align*}
which when dropping the nonnegative term $\Eset\left[\,r(k+1)\,\right]$ and dividing both sides by $(k+2)\,/\,n$ yields 
\begin{align}
&\frac{2\alpha(0)}{(k+2)}\sum_{t=0}^{k}\Eset \Big[f(\xbf_{\ell}(t)) - f(\xbf^*)\Big]\notag\\
&\leq \frac{n\Eset[\,r(0)\,]}{k+2}+\frac{6L^2\alpha^2(0)}{(1-\beta(0))(k+2)}\sum_{t=0}^{k}\frac{1}{(t+2)}\notag\\ 
&\qquad +  \frac{n}{k+2}\sum_{t=0}^{k}\left(\frac{\Delta^2\beta^2(0)}{(t+2)^{1/3}} +  \frac{4L^2\alpha^2(0)}{n\beta(0)(t+2)^{1/3}}\right)\notag\\
&\qquad + \frac{4n}{(k+2)}\sum_{t=0}^{k}(t+2)\beta(t)\Eset[\,\|\Ybf(t)\|^2\,].\label{thm_rate_conv_sc:Eq3}
\end{align}
We now analyze the last term on the right-hand side of Eq.\ \eqref{thm_rate_conv_sc:Eq3}. First, Eq.\ \eqref{lem_consensus:UpperBound} yields
\begin{align}
&\sum_{t=0}^{k}(t+2)\beta(t)\Eset[\,\|\Ybf(t)\|^2\,]\notag\\
&\leq \sum_{t=0}^{k}\frac{t+2}{1-\sigma_2}\Big[\Eset[\,\|\Ybf(t)\|^2\,]-\Eset[\,\|\Ybf(t+1)\|^2\,]\Big]\notag\\
&\qquad + \sum_{t=0}^{k}(t+2)\left(\frac{n\sigma_2^2\Delta^2\beta^2(t)}{1-\sigma_2} + \frac{8L^2\alpha^2(t)}{(1-\sigma_2)^2\beta(t)}\right)\notag\\
&\leq \frac{\sum_{t=0}^{k}\Eset[\,\|\Ybf(t)\|^2\,]}{1-\sigma_2}+ \sum_{t=0}^{k}\frac{n\sigma_2^2\Delta^2\beta^2(0)}{(1-\sigma_2)(t+2)^{1/3}}\notag\\
&\qquad + \sum_{t=0}^{k}\frac{8L^2\alpha^2(0)}{\beta(0)(1-\sigma_2)^2(t+2)^{1/3}}\cdot\label{thm_rate_conv_sc:Eq3a}
\end{align}
Recall that, by Eq.\ \eqref{thm_rate_conv_sc:stepsizes} we have
\begin{align*}
1-(1-\sigma_2)\beta(k) = 1-\frac{(1-\sigma_2)b}{(k+2)^{2/3}}\leq 1-\frac{1}{k+2}=\frac{k+1}{k+2}\cdot
\end{align*}
Second, using Eq.\ \eqref{lem_consensus:UpperBound} one more time gives
\begin{align*}
&\Eset[\,\|\Ybf(k+1)\,\|^2]\notag\\ 
&\leq (1-(1-\sigma_2)\beta(k))\Eset[\,\|\Ybf(k)\,\|^2] \notag\\
&\quad+ n\sigma_2^2\Delta^2\beta^2(k)+\frac{8L^2\alpha^2(k)}{(1-\sigma_2)\beta(k)}\notag\\
&= \left(\frac{k+1}{k+2}\right)\Eset[\,\|\Ybf(k)\,\|^2]+ n\sigma_2^2\Delta^2\beta^2(k)+\frac{8L^2\alpha^2(k)}{(1-\sigma_2)\beta(k)},
\end{align*}
which implies that
\begin{align*}
&(1-\sigma_2)\Eset[\,\|\Ybf(k+1)\,\|^2]\notag\\
&\leq \frac{1}{k+2}\frac{\Eset[\,\|\Ybf(0)\|\,]}{1-\sigma_2}+ \sum_{t=0}^{k} \frac{n\sigma_2^2\Delta^2\beta^2(t)}{(1-\sigma_2)}\prod_{u=t+1}^{k}\frac{u+1}{u+2}\notag\\ 
&\qquad +  \sum_{t=0}^{k} \frac{8L^2\alpha^2(t)}{(1-\sigma_2)^2\beta(t)}\prod_{u=t+1}^{k}\frac{u+1}{u+2}\notag\\
&=\frac{1}{k+2}\frac{\Eset[\,\|\Ybf(0)\|\,]}{1-\sigma_2} + \frac{1}{k+2}\sum_{t=0}^{k} \frac{n\sigma_2^2\Delta^2\beta^2(0)}{(1-\sigma_2)(t+2)^{1/3}}\\
&\qquad +\frac{1}{k+2}\sum_{t=0}^{k} \frac{8L^2\alpha^2(0)}{\beta(0)(1-\sigma_2)^2(t+2)^{1/3}}\allowdisplaybreaks\\
&\leq \frac{1}{k+2}\frac{\Eset[\,\|\Ybf(0)\|\,]}{1-\sigma_2} + \frac{3n\sigma_2^2\Delta^2\beta^2(0)}{2(1-\sigma_2)(k+2)^{1/3}}\\
&\qquad +\frac{12L^2\alpha^2(0)}{\beta(0)(1-\sigma_2)^2(k+2)^{1/3}},
\end{align*}
where the last inequality we use the integral test to have 
\begin{align}
\sum_{t=0}^{k}\frac{1}{(t+2)^{1/3}}\leq \frac{3}{2}(k+2)^{2/3}.\label{thm_rate_conv_sc:Eq3b}     
\end{align}
Thus, using the equation above into Eq.\ \eqref{thm_rate_conv_sc:Eq3a} we have
\begin{align*}
&\sum_{t=-1}^{k}(t+2)\beta(t)\Eset[\,\|\Ybf(t)\|^2\,]\notag\\
&\leq \frac{\Eset[\,\|\Ybf(0)\|\,](1+\ln(k+2))}{1-\sigma_2} + \sum_{t=0}^{k}\frac{3n\sigma_2^2\Delta^2\beta^2(0)}{(1-\sigma_2)^2(k+2)^{1/3}}\\
&\qquad +\sum_{t=0}^{k}\frac{20L^2\alpha^2(0)}{\beta(0)(1-\sigma_2)^3(k+2)^{1/3}},
\end{align*}
which when substituting into Eq.\ \eqref{thm_rate_conv_sc:Eq3} yields
\begin{align}
&\frac{2\alpha(0)}{(k+2)}\sum_{t=0}^{k}\Eset \Big[f(\xbf_{\ell}(t)) - f(\xbf^*)\Big]\notag\\
&\leq \frac{n\Eset[\,r(0)\,]}{(k+2)} + \frac{4n\Eset[\,\|\Ybf(0)\|\,](1+\ln(k+2))}{(1-\sigma_2)(k+2)} \notag\\ 
&\qquad + \frac{6L^2\alpha^2(0)}{(1-\beta(0))(k+2)}\sum_{t=0}^{k}\frac{1}{(t+2)}\notag\\
&\qquad +\frac{n}{k+2}\sum_{t=0}^{k}\frac{\Delta^2\beta^2(0)}{(t+2)^{1/3}}\displaybreak[0]\notag\\ 
&\qquad +  \frac{1}{k+2}\sum_{t=0}^{k}  \frac{4L^2\alpha^2(0)}{\beta(0)(t+2)^{1/3}}\notag\\
&\qquad + \frac{12n^2\beta^2(0)\sigma_2^2\Delta^2}{(1-\sigma_2)^2(k+2)}\sum_{t=0}^{k}\frac{1}{(t+2)^{1/3}}\nonumber\\
&\qquad+\frac{40nL^2\alpha(0)}{\beta(0)(1-\sigma_2)^3(k+2)}\sum_{t=0}^{k}\frac{1}{(t+2)^{1/3}}\cdot\label{thm_rate_conv_sc:Eq4}
\end{align}
Using Eq.\ \eqref{thm_rate_conv_sc:Eq3b} into Eq. \eqref{thm_rate_conv_sc:Eq4} gives
\begin{align}
&2\alpha(0)\left[\frac{\sum_{t=0}^{k}\Eset \Big[f(\xbf_{\ell}(t))\Big]}{k+1} - f(\xbf^*)\right]\notag\\
&\leq \frac{2\alpha(0)(k+1)}{k+2}\left[\frac{\sum_{t=0}^{k}\Eset \Big[f(\xbf_{\ell}(t))\Big]}{k+1} - f(\xbf^*)\right]\notag\\
&\leq \frac{n\Eset[\,r(0)\,]}{(k+2)} + \frac{4n\Eset[\,\|\Ybf(0)\|\,](1+\ln(k+2))}{(1-\sigma_2)(k+2)}\notag\\ 
&\qquad + \frac{6L^2\alpha^2(0)(1+\ln(k+2))}{(1-\beta(0))(k+2)}\notag\\
&\qquad +\frac{n\Delta^2\beta^2(0)}{(k+2)^{1/3}} +  \frac{4L^2\alpha^2(0)}{\beta(0)(k+2)^{1/3}} \notag\\
&\qquad + \frac{8n^2\sigma_2^2\Delta^2\beta^2(0)}{(1-\sigma_2)^2(k+2)^{1/3}} + \frac{27nL^2\alpha(0)}{\beta(0)(1-\sigma_2)^3(k+2)^{1/3}}\cdot\label{thm_rate_conv_sc:Eq4a}
\end{align}
The Jsensen's inequality and the strong convexity of $f$ give 
\begin{align}
\frac{\mu}{2}\Eset[\,\|\zbf_{\ell}(k)-\xbf^*\|^2\,]&\leq \Eset[\,f(\zbf_{\ell}(k))\,] - f^*\notag\\
&\leq \frac{\sum_{t=0}^{k}\Eset \Big[f(\xbf_{\ell}(t))\Big]}{k+1} - f(\xbf^*). \label{thm_rate_conv_sc:Eq4b}
\end{align}
Moreover, note that $\mu\alpha(0) > 1$. Thus, using \eqref{thm_rate_conv_sc:Eq4a} and \eqref{thm_rate_conv_sc:Eq4b} gives Eq.\ \eqref{thm_rate_conv_sc:Ineq}, i.e. 
\begin{align*}
&\Eset[\,\|\zbf_{\ell}(k)-\xbf^*\|^2\,]\\
&\leq \frac{n\Eset[\,r(0)\,]}{(k+2)} + \frac{4n\Eset[\,\|\Ybf(0)\|\,](1+\ln(k+2))}{(1-\sigma_2)(k+2)}\notag\\ 
&\qquad + \frac{6L^2\alpha^2(0)(1+\ln(k+2))}{(1-\beta(0))(k+2)}\notag\\
&\qquad +\frac{n\Delta^2\beta^2(0)}{(k+2)^{1/3}} +  \frac{4L^2\alpha^2(0)}{\beta(0)(k+2)^{1/3}} \notag\\
&\qquad + \frac{8n^2\sigma_2^2\Delta^2\beta^2(0)}{(1-\sigma_2)^2(k+2)^{1/3}} + \frac{27nL^2\alpha(0)}{\beta(0)(1-\sigma_2)^3(k+2)^{1/3}}\cdot
\end{align*}

\section{Concluding Remarks}\label{sec:conclusion}
In this paper, we consider distributed optimization over networks of nodes under quantized communication. For solving such problems, we propose a distributed variant of the popular stochastic approximation. Our main contribution is to establish the convergence of the distributed stochastic approximation. In addition, we provide an explicit formula for the rate of convergence of our proposed method as a function of the underlying network topology and the number of quantized bits.  

As mentioned, the distributed stochastic approximation considered in this paper can be viewed as a distributed two-time-scale algorithm. Thus, we believe that the proposed algorithm can be extended for solving other problems, such as, distributed reinforcement learning over multi-agent systems. This an interesting topic which we leave for our future studies.

\bibliographystyle{ACM-Reference-Format}
\bibliography{refs}



\appendix


\section{Proofs of Results in Section \textbf{4.1} }
In this section, we provide the analysis for the results stated in Section \ref{sec:preliminaries}. To do that, we utilize the result on the properties of the projection studied in \cite{NedicOP2010}, stated as follows.
\begin{lem}[Lemma $1$ \cite{NedicOP2010}]\label{apx_lem:projection}
Let $\Xcal$ be a nonempty closed convex set in $\Rset^d$. Then, we have for any $\xbf\in\mathbb{R}^d$
\begin{enumerate}[leftmargin = 6mm]
\item[(a)] $(\Pcal_{\Xcal}[\xbf]-\xbf)^T(\xbf-\ubf)\leq -\|\Pcal_{\Xcal}[\xbf]-\xbf\|^2\;$ for all $\ubf\in\Xcal$.
\item[(b)] $\|\Pcal_{\Xcal}[\xbf]-\ubf\|^2\leq \|\xbf-\ubf\|^2 -\|\Pcal_{\Xcal}[\xbf]-\xbf\|^2\;$ for all $\ubf\in\Xcal$.
\end{enumerate}
\end{lem}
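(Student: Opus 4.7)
The plan is to prove both parts starting from the variational characterization of the projection: for any $\xbf \in \Rset^d$ and any $\ubf \in \Xcal$, the point $\Pcal_{\Xcal}[\xbf]$ is the unique minimizer over $\Xcal$ of the strongly convex function $\vbf \mapsto \tfrac{1}{2}\|\vbf - \xbf\|^2$, and so the first-order optimality condition gives
\begin{align*}
(\ubf - \Pcal_{\Xcal}[\xbf])^T(\Pcal_{\Xcal}[\xbf] - \xbf) \geq 0, \qquad \forall\, \ubf \in \Xcal.
\end{align*}
This is the only geometric fact I will need; both (a) and (b) then reduce to algebraic manipulations.

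For part (a), I would write
\begin{align*}
(\Pcal_{\Xcal}[\xbf] - \xbf)^T(\xbf - \ubf)
&= (\Pcal_{\Xcal}[\xbf] - \xbf)^T\bigl[(\xbf - \Pcal_{\Xcal}[\xbf]) + (\Pcal_{\Xcal}[\xbf] - \ubf)\bigr] \\
&= -\|\Pcal_{\Xcal}[\xbf] - \xbf\|^2 - (\Pcal_{\Xcal}[\xbf] - \xbf)^T(\ubf - \Pcal_{\Xcal}[\xbf]),
\end{align*}
and then observe that the variational inequality above says exactly that the last inner product is $\geq 0$, which gives the claimed bound.

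For part (b), I would expand the squared norm by adding and subtracting $\xbf$:
\begin{align*}
\|\Pcal_{\Xcal}[\xbf] - \ubf\|^2
&= \|\Pcal_{\Xcal}[\xbf] - \xbf\|^2 + \|\xbf - \ubf\|^2 + 2(\Pcal_{\Xcal}[\xbf] - \xbf)^T(\xbf - \ubf).
\end{align*}
Substituting the bound from part (a) on the cross term immediately yields
\begin{align*}
\|\Pcal_{\Xcal}[\xbf] - \ubf\|^2 \leq \|\xbf - \ubf\|^2 - \|\Pcal_{\Xcal}[\xbf] - \xbf\|^2,
\end{align*}
which is the claim.

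There is no real obstacle here: both statements are classical consequences of the variational inequality for the Euclidean projection onto a closed convex set, and the proof is essentially a two-line algebraic identity in each part once that inequality is invoked. The only thing to justify carefully up front is the variational inequality itself, which one can either cite from a standard reference or derive in one line by noting that convexity of $\Xcal$ implies $\Pcal_{\Xcal}[\xbf] + t(\ubf - \Pcal_{\Xcal}[\xbf]) \in \Xcal$ for $t \in [0,1]$, and then using the minimality of $\Pcal_{\Xcal}[\xbf]$ against this line segment at $t = 0^+$.
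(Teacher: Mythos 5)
Your proof is correct. Note that the paper does not prove this lemma at all -- it is stated as an imported result, cited from Nedi\'c--Ozdaglar--Parrilo, so there is no in-paper argument to compare against; your derivation of both parts from the variational inequality $(\ubf - \Pcal_{\Xcal}[\xbf])^T(\Pcal_{\Xcal}[\xbf]-\xbf)\geq 0$ is the standard one (and is essentially the proof given in the cited reference), with the algebra in both (a) and (b) checking out exactly.
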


\subsection{Proof of Lemma \ref{lem:consensus_bound}}
Since $\Ybf(k) = \Xbf(k) -\1\xbarbf^T(k) = \Wbf\Xbf(k)$, Eqs.\ \eqref{anlaysis:Xupdate} and \eqref{analysis:xbar} give
\begin{align}
\Ybf(k+1) &= \Wbf\Xbf(k+1)\notag\\
&= (1-\beta(k))\Ybf(k) + \beta(k)\Abf\Wbf\Qbf(k)\notag\\
&\qquad -\alpha(k)\Wbf\Gbf(k) - \Wbf\Xibf(k).\label{lem_consensus:Eq1}
\end{align}
By the Cauchy-Schwarz inequality we have for any $\eta > 0$ and $a,b\in\Rset$   
\begin{align}
(a+b)^2\leq (1+\eta)a^2 + (1+1/\eta)b^2.\label{lem_consensus:Eq1a}
\end{align}
Thus, by taking the $2$-norm square of Eq.\ \eqref{lem_consensus:Eq1} and using Eq.\ \eqref{lem_consensus:Eq1a} with $\eta = 1 + (1-\sigma_2)\beta(k)$ we have
\begin{align}
&\|\Ybf(k+1)\|^2\notag\\ 
&= \Bigg\|(1-\beta(k))\Ybf(k) + \beta(k)\Abf\Wbf\Qbf(k)\notag\\
&\qquad -\alpha(k)\Wbf\Gbf(k)-\Wbf\Xibf(k)\Bigg\|^2\nonumber\\
&\leq (1+(1-\sigma_2)\beta(k)) \left\|(1-\beta(k))\Ybf(k) + \beta(k)\Abf\Wbf\Qbf(k)\right\|^2\notag\\
&\qquad + \left(1+\frac{1}{(1-\sigma_2)\beta(k)}\right)\left\|\alpha(k)\Wbf\Gbf(k)+\Wbf\Xibf(k)\right\|^2\notag\\
&\leq (1+(1-\sigma_2)\beta(k)) \left\|(1-\beta(k))\Ybf(k) + \beta(k)\Abf\Wbf\Qbf(k)\right\|^2\notag\\
&\qquad + \left(2+\frac{2}{(1-\sigma_2)\beta(k)}\right)\left\|\alpha(k)\Wbf\Gbf(k)\right\|^2\notag\\
&\qquad + \left(2+\frac{2}{(1-\sigma_2)\beta(k)}\right)\left\|\Wbf\Xibf(k)\right\|^2,\label{lem_consensus:Eq2}
\end{align}
where the last inequality we use Eq.\ \eqref{lem_consensus:Eq1a} with $\eta = 1$. First, Proposition \ref{prop:bounded_subg} gives
\begin{align}
\left\|\Wbf\Gbf(k)\right\|^2 \leq \|\Gbf(k)\|^2 \leq L^2.\label{lem_consensus:Eq2a} 
\end{align}
Second, denote by $$\ubf_i(k) = (1-\beta(k))\xbf_i(k) + \beta(k)\sum_{j\in\Ncal_i}a_{ij}\qbf_j(k)\in\Xcal\;\text{a.s.},$$ since $\xbf_i(k),\qbf_i(k)\in\Xcal$, for all $i\in\Vcal$ and $k\geq0$. By Lemma \ref{apx_lem:projection}(b) we have
\begin{align}
&\|\Wbf\Xibf(k)\|^2 \leq \|\Xibf(k)\|^2 = \sum_{i=1}^n\|\xibf_i(k)\|^2\notag\\
&= \sum_{i=1}^n\|\vbf_i(k)-\xbf_i(k+1)\|^2 = \sum_{i=1}^n\|\vbf_i(k)-\Pcal_{\Xcal}[\vbf_i(k)]\|^2\notag\\ 
&\leq \sum_{i=1}^n \|\vbf_i(k)-\ubf_i(k)\|^2= \sum_{i=1}^n\left\|\alpha(k)\gbf_i(k)\right\|^2\notag\\
&\leq \sum_{i=1}^n\alpha^2(k)L_i^2\leq  L^2\alpha^2(k),\label{lem_consensus:Eq2b} 
\end{align}
where the last inequality is due Proposition \ref{prop:bounded_subg}. 
Third, by Eq.\ \eqref{notation:random_quantization} we have
\begin{align}
\|\Ebf(k)\|^2 &= \sum_{i=1}^{n}\|\xbf_i(k)-\qbf_i(k)\|^2\leq \sum_{i=1}^{n}\sum_{\ell=1}^d\Big(\Delta^{\ell}\Big)^2\notag\\
&\leq \sum_{i=1}^{n}\Delta^2 = n\Delta^2, \label{lem_consensus:Eq2c1}
\end{align}
where recall that $\Delta^{\ell}$ is the distance of $\ell$-coordinate of $\xbf_i-\qbf_i$. In addition, by Eq.\ \eqref{const:sigma_2} we have
\begin{align}
&\|(1-\beta(k))\Ybf(k) + \beta(k)\Abf\Ybf(k)\|^2\notag\\
&\qquad \leq \|(1-(1-\sigma_2)\beta(k))\Ybf(k)\|.    \label{lem_consensus:Eq2c2}
\end{align}
Thus, using Eqs.\ \eqref{lem_consensus:Eq2c1} and \eqref{lem_consensus:Eq2c2} we consider 
\begin{align}
&\left\|(1-\beta(k))\Ybf(k) + \beta(k)\Abf\Wbf\Qbf(k)\right\|^2\notag\\
&= \|(1-\beta(k))\Ybf(k) + \beta(k)\Abf\Ybf(k) +  \beta(k)\Abf\Wbf\Ebf(k)\|^2\notag\\
&= \|(1-\beta(k))\Ybf(k) + \beta(k)\Abf\Ybf(k)\|^2 + \|\beta(k)\Abf\Wbf\Ebf(k)\|^2\notag\\
&\quad + 2\beta(k)\Big((1-\beta(k))\Ybf(k) + \beta(k)\Abf\Ybf(k)\Big)^T\Abf\Wbf\Ebf(k)\notag\\
&\stackrel{\eqref{const:sigma_2}}{\leq}  \|(1-\beta(k))\Ybf(k) + \beta(k)\Abf\Ybf(k)\|^2 + \sigma_2^2\beta^2(k)\|\Ebf(k)\|^2\notag\\
&\quad + 2\beta(k)\Big((1-\beta(k))\Ybf(k) + \beta(k)\Abf\Ybf(k)\Big)^T\Abf\Wbf\Ebf(k)\notag\\
&\underset{\eqref{lem_consensus:Eq2c2}}{\stackrel{\eqref{lem_consensus:Eq2c1}}{\leq}} \|(1-(1-\sigma_2)\beta(k))\Ybf(k)\|^2 + n\sigma_2^2\Delta^2\beta^2(k)\notag\\
&\quad + 2\beta(k)\Big((1-\beta(k))\Ybf(k) + \beta(k)\Abf\Ybf(k)\Big)^T\Abf\Wbf\Ebf(k). \label{lem_consensus:Eq2c}
\end{align}
Note that by Eq. \eqref{notation:random_quantization} we have $\Eset[\,\Ebf(k)\,] = 0$. Thus, taking the conditional expectation of Eq.\ \eqref{lem_consensus:Eq2} w.r.t. $\Fcal_k$ and using Eqs.\ \eqref{lem_consensus:Eq2a}, \eqref{lem_consensus:Eq2b}, and  \eqref{lem_consensus:Eq2c} yields Eq.\ \eqref{lem_consensus:UpperBound}, i.e.,
\begin{align*}
&\Eset[\|\Ybf(k+1)\|^2\,|\,\Fcal_k]\notag\\
&\qquad\leq(1+(1-\sigma_2)\beta(k)) (1-(1-\sigma_2)\beta(k))^2\|\Ybf(k)\|^2 \notag\\
&\qquad\qquad + n\sigma_2^2\Delta^2\beta^2(k) + 4L^2\left(1+\frac{1}{(1-\sigma_2)\beta(k)}\right)\alpha^2(k)\notag\\
&\qquad\leq  (1-(1-\sigma_2)\beta(k))\|\Ybf(k)\|^2 \notag\\
&\qquad\qquad + n\sigma_2^2\Delta^2\beta^2(k)+\frac{4L^2(\beta(0)+1)}{(1-\sigma_2)}\frac{\alpha^2(k)}{\beta(k)},
\end{align*}
where the last inequality is because $\beta(k)$ is nonincreasing.

Finally, taking the expectation on both sides of the preceding relation and summing up over $k=0,\ldots,K$ for some $K$ immediately give Eq.\ \eqref{lem_consensus:UpperBound_rate}. 

\subsection{Proof of Lemma \ref{lem:consensus}}
Recall from Eq.\ \eqref{lem_consensus:UpperBound} that 
\begin{align*}
&\Eset[\|\Ybf(k+1)\|^2\,|\,\Fcal_k]\notag\\
&\qquad\leq \|\Ybf(k)\|^2 -(1-\sigma_2)\beta(k)\|\Ybf(k)\|^2 \notag\\
&\qquad\qquad + n\sigma_2^2\Delta^2\beta^2(k)+\frac{8L^2}{(1-\sigma_2)}\frac{\alpha^2(k)}{\beta(k)}.
\end{align*}
In addition, since $\alpha(k)$ and $\beta(k)$ satisfy Eq.\ \eqref{lem_consensus:stepsizes} we can apply Lemma \ref{lem:Martingale} to the preceding equation. Thus, we obtain $\{\Ybf(k)\}$ converges a.s. and $\sum_{k=0}^{\infty}\beta(k)\|\Ybf(k)\|^2 < \infty$. Since $\sum_{t=0}^{\infty}\beta(k) = \infty$, the preceding relation immediately gives 
\begin{align*}
\lim_{k\rightarrow\infty}\|\xbf_i(k)-\xbarbf(k)\| =0 \qquad\text{a.s.,}\quad \text{ for all } i\in\Vcal.      
\end{align*}
\subsection{Proof of Lemma \ref{lem:opt_dist}}
First, Eq.\ \eqref{analysis:xbar} gives
\begin{align*}
&r(k+1) = \|\vbarbf(k)-\xibarbf(k)-\xbf^*\|^2\notag\\ 
&= \left\|\begin{array}{ll} (1-\beta(k))\xbarbf(k)-\xbf^* -\frac{\alpha(k)}{n}\sum_{i=1}^n \gbf_i(\xbf_i(k))\\ 
\qquad\qquad + \beta(k)\qbarbf(k)-\xibarbf(k)
\end{array}
\right\|^2\nonumber\\
&= \left\|\xbarbf(k)-\xbf^*-\frac{\alpha(k)}{n}\sum_{i=1}^n \gbf_i(x_i(k))\right\|^2\notag\\ 
&\quad + \left\|\beta(k)\ebarbf(k)\right\|^2 + \left\|\xibarbf(k)\right\|^2 - 2\beta(k)\ebarbf^T(k)\xibarbf(k) \notag\\
&\quad + 2\beta(k)\ebarbf^T(k)\left(\xbarbf(k)-\xbf^* -\frac{\alpha(k)}{n}\sum_{i=1}^n \gbf_i(\xbf_i(k))\right)\notag\\
&\quad - 2\xibarbf^T(k)\left(\xbarbf(k)-\xbf^* -\frac{\alpha(k)}{n}\sum_{i=1}^n \gbf_i(\xbf_i(k))\right),
\end{align*}
which by taking the conditional expectation w.r.t $\Fcal_k$ yields
\begin{align}
&\Eset\left[\,r(k+1)\,|\Fcal_k\,\right]\notag\\
&= \left\|\xbarbf(k)-\xbf^*-\frac{\alpha(k)}{n}\sum_{i=1}^n \gbf_i(x_i(k))\right\|^2\notag\\ 
&\quad + \left\|\beta(k)\ebarbf(k)\right\|^2 + \left\|\xibarbf(k)\right\|^2 \notag\\
&\qquad - 2\xibarbf^T(k)\Big(\xbarbf(k)-\xbf^*\Big)\notag\\ 
&\qquad + \frac{2\alpha(k)}{n}\xibarbf^T(k) \sum_{i=1}^n \gbf_i(\xbf_i(k)).\label{lem_dist_opt:Eq1}
\end{align}
First, we use Eq.\ \eqref{lem_consensus:Eq2b} to have $\|\xibf_i(k)\|\leq L_i\alpha(k)$. Thus, by Eq. \eqref{notation:random_quantization} we obtain
\begin{align}
& \left\|\beta(k)\ebarbf(k)\right\|^2 + \left\|\xibarbf(k)\right\|^2\notag\\   
&\quad = \beta^2(k)\left\|\frac{1}{n}\sum_{i=1}^n\ebf_i(k) \right\|^2 + \left\|\frac{1}{n}\sum_{i=1}^n\xibf_i(k) \right\|^2\notag\\
&\quad \leq \frac{\beta^2(k)}{n}\sum_{i=1}^n\left\|\ebf_i\right\|^2 + \frac{\alpha^2(k)}{n}\sum_{i=1}^nL_i^2 \leq \Delta^2\beta^2(k) + \frac{L^2\alpha^2(k)}{n}\cdot  \label{lem_dist_opt:Eq1a}
\end{align}
Second, using Proposition \ref{prop:bounded_subg} gives
\begin{align}
&\frac{2\alpha(k)}{n}\xibarbf^T(k) \sum_{i=1}^n \gbf_i(\xbf_i(k))\notag\\
&\quad \leq \frac{2\alpha(k)}{n}\frac{1}{n}\sum_{i=1}^n L_i\alpha(k)\sum_{i=1}^nL_i = \frac{2L^2\alpha^2(k)}{n^2}\cdot   \label{lem_dist_opt:Eq1b}
\end{align}
Third, for convenience let $\rbf_i(k),\hbf_i(k)$ be defined as 
\begin{align*}
\rbf_i(k) &= \frac{\beta(k)}{1-\beta(k)}\vbf_i(k)- \frac{\beta^2(k)}{1-\beta(k)}\sum_{j=1}^na_{ij}\qbf_j(k)\notag\\
\hbf_i(k) &= (1-\beta(k))\xbf^* + \beta(k)\sum_{j=1}^na_{ij}\qbf_j(k)\; \in\Xcal\quad \text{a.s.}       
\end{align*}
Using $\rbf_i(k),\hbf_i(k)$ and by Lemma \ref{apx_lem:projection}(a) we consider 
\begin{align}
&= -\frac{2}{n\beta(k)}\sum_{i=1}^n \xibf_i^T(k)\Big(\rbf_i(k)-\beta(k)\xbf^*\Big)\notag\\
&= -\frac{2}{n(1-\beta(k))}\sum_{i=1}^n \xibf_i^T(k)\Big(\vbf_i(k)-\hbf_i(k) \Big)\notag\\
&= \frac{2}{n(1-\beta(k))}\sum_{i=1}^n \Big(\Pcal_{\Xcal}[\vbf_i(k)]-\vbf_i(k)\Big)\Big(\vbf_i(k)-\hbf_i(k) \Big)\notag\\
&\leq -\frac{2}{n(1-\beta(k))}\sum_{i=1}^{n}\|\xibf_i(k)\|^2\leq 0.
\label{lem_dist_opt:Eq1c1}
\end{align}
Thus, we have
\begin{align}
&- 2\xibarbf^T(k)\Big(\xbarbf(k)-\xbf^*\Big)\notag\\ 
&= -\frac{2}{n\beta(k)}\sum_{i=1}^n \xibf_i^T(k)\Big(\beta(k)\xbarbf(k)-\beta(k)\xbf^*\Big)\notag\\
& = -\frac{2}{n\beta(k)}\sum_{i=1}^n \xibf_i^T(k)\Big(\beta(k)\xbarbf(k) - \rbf_i(k)\Big)\notag\\ 
&\qquad  -\frac{2}{n\beta(k)}\sum_{i=1}^n \xibf_i^T(k)\Big(\rbf_i(k)-\beta(k)\xbf^*\Big)\notag\\
&\stackrel{\eqref{lem_dist_opt:Eq1c1}}{\leq} -\frac{2}{n\beta(k)}\sum_{i=1}^n \xibf_i^T(k)\beta(k)(\xbarbf(k)-\xbf_i(k))\notag\\ 
&\qquad + \frac{2}{n\beta(k)}\sum_{i=1}^n \xibf_i^T(k)\frac{\alpha(k)\beta(k)}{1-\beta(k)}\gbf_i(\xbf_i(k))\allowdisplaybreaks\notag\\ 
&\leq \frac{2}{n}\sum_{i=1}^n\| \xibf_i^T(k)\|\,\|\ybf_i(k)\| + \frac{2\alpha(k)}{n(1-\beta(k))}\sum_{i=1}^n L_i\|\xibf_i(k)\|\notag\\ 
&\stackrel{\eqref{lem_consensus:Eq2b}}{\leq} \frac{2}{n}\sum_{i=1}^nL_i\alpha(k)\,\|\ybf_i(k)\| + \frac{2L^2\alpha^2(k)}{n(1-\beta(k))}\notag\\
&\leq \frac{L^2\alpha^2(k)}{n\beta(k)} + \frac{\beta(k)\|\Ybf(k)\|^2}{n} + \frac{2L^2\alpha^2(k)}{n(1-\beta(0))}, 
\label{lem_dist_opt:Eq1c}
\end{align}
where the las inequality is due to the Cauchy-Schwarz inequality and $1-\beta(k)\geq 1-\beta(0)$. Next, consider the following  
\begin{align}
&\left\|\xbarbf(k)-\xbf^*-\frac{\alpha(k)}{n}\sum_{i=1}^n \gbf_i(x_i(k))\right\|^2\notag\\
&= \left\|\xbarbf(k)-\xbf^*\right\|^2 + \left\|\frac{\alpha(k)}{n}\sum_{i=1}^n \gbf_i(x_i(k))\right\|^2\notag\\
&\qquad - \frac{2\alpha(k)}{n}\sum_{i=1}^n \gbf_i^T(x_i(k))\Big(\xbarbf(k)-\xbf^*\Big)\allowdisplaybreaks\notag\\
&\leq \left\|\xbarbf(k)-\xbf^*\right\|^2 + \frac{L^2\alpha^2(k)}{n^2}\notag\\
&\qquad - \frac{2\alpha(k)}{n}\sum_{i=1}^n \gbf_i^T(x_i(k))\Big(\xbarbf(k)-\xbf_i(k)+\xbf_i(k)-\xbf^*\Big)\notag\\
&\leq r(k) + \frac{L^2\alpha^2(k)}{n^2}+\frac{L^2\alpha^2(k)}{n\beta(k)} + \frac{\beta(k)\|\Ybf(k)\|^2}{n}\notag\\
&\qquad - \frac{2\alpha(k)}{n}\sum_{i=1}^n \gbf_i^T(x_i(k))\Big(\xbf_i(k)-\xbf^*\Big),
\label{lem_dist_opt:Eq1d}
\end{align}
where the last inequality is due to
\begin{align*}
&- \frac{2\alpha(k)}{n}\sum_{i=1}^n \gbf_i^T(x_i(k))\Big(\xbarbf(k)-\xbf_i(k)\Big)\notag\\ 
&\leq \sum_{i=1}^n \frac{\alpha^2(k)}{n\beta(k)}\|\gbf_i(\xbf_i(k))\|^2 +  \sum_{i=1}^n \beta(k)\|\xbarbf(k)-\xbf_i(k)\|^2\notag\\
&\leq \frac{L^2\alpha^2(k)}{n\beta(k)} + \frac{\beta(k)\|\Ybf(k)\|^2}{n}.
\end{align*}
We now substitute Eqs.\ \eqref{lem_dist_opt:Eq1a}, \eqref{lem_dist_opt:Eq1b},  \eqref{lem_dist_opt:Eq1c}, and \eqref{lem_dist_opt:Eq1d} into Eq.\ \eqref{lem_dist_opt:Eq1} to have Eq.\ \eqref{lem_opt_dist:Ineq}, i.e., 
\begin{align*}
&\Eset\left[\,r(k+1)\,|\,\Fcal_k\right]\notag\\ 
&\leq r(k) + \frac{L^2\alpha^2(k)}{n^2}+\frac{L^2\alpha^2(k)}{n\beta(k)} + \frac{\beta(k)\|\Ybf(k)\|^2}{n}\notag\\
&\qquad + \Delta^2\beta^2(k) + \frac{L^2\alpha^2(k)}{n} + \frac{2L^2\alpha^2(k)}{n^2}\notag\\
&\qquad + \frac{L^2\alpha^2(k)}{n\beta(k)} + \frac{\beta(k)\|\Ybf(k)\|^2}{n} + \frac{2L^2\alpha^2(k)}{n(1-\beta(0))}\notag\\
&\qquad - \frac{2\alpha(k)}{n}\sum_{i=1}^n \gbf_i^T(x_i(k))\Big(\xbf_i(k)-\xbf^*\Big)\allowdisplaybreaks\notag\\
&\leq r(k) +\frac{2L^2\alpha^2(k)}{n\beta(k)} + \Delta^2\beta^2(k)\notag\\
&\qquad +\frac{6L^2\alpha^2(k)}{n(1-\beta(0))} + \frac{2\beta(k)\|\Ybf(k)\|^2}{n}\notag\\
&\qquad - \frac{2\alpha(k)}{n}\sum_{i=1}^n \gbf_i^T(x_i(k))\Big(\xbf_i(k)-\xbf^*\Big).
 \end{align*}

\end{document}